\newcommand{\qdn}{\hspace*{-1.5mm}}
\newcommand{\qqdn}{\hspace*{-2.5mm}}
\newcommand{\xqdn}{\hspace*{-5.0mm}}
\newcommand{\xxqdn}{\hspace*{-10mm}}
\newcommand{\binm}{\binom}
\newcommand{\nnm}{\nonumber}
\newcommand{\be}{\begin{equation}}
\newcommand{\ee}{\end{equation}}
\newcommand{\ba}{\begin{array}}
\newcommand{\ea}{\end{array}}
\newcommand{\bmn}{\begin{eqnarray}}
\newcommand{\emn}{\end{eqnarray}}
\newcommand{\bnm}{\begin{eqnarray*}}
\newcommand{\enm}{\end{eqnarray*}}
\newcommand{\bln}{\begin{subequations}}
\newcommand{\eln}{\end{subequations}}
\newtheorem{thm}{Theorem}
\newtheorem{entry}{Entry}
\newcommand{\bbtm}[4]{\bibitem{kn:#1}{#2,}~{#3,}~{#4.}}
\newcommand{\cito}[1]{\cite{kn:#1}}
\newcommand{\citu}[2]{\cite[#2]{kn:#1}}
\begin{document} 
{
\title{Summation formulas involving harmonic numbers with even or odd indexes}
\author{$^{*}$Chuanan Wei$^{a}$, Dianxuan Gong$^{b}$, Lily Li Liu$^{c}$}

\footnote{\emph{2010 Mathematics Subject Classification}: Primary
05A10 and Secondary 11A15.}

\dedicatory{$^{A}$Department of Medical Informatics,
 Hainan Medical University, Haikou, China\\
 $^{B}$College of Sciences,
  North China University of Science and Technology, Tangshan,
  China\\
  $^{C}$School of Mathematical Sciences,
 Qufu Normal University, Qufu, China}
\thanks{The corresponding author$^{*}$ \emph{Email address}: weichuanan78@163.com (Chuanan Wei),
gdx216@163. com (Dianxuan Gong), liulily@mail.qfnu.edu.cn (Lily Li
Liu)}

 \keywords{Harmonic numbers; Derivative operator; Chu-Vandermonde convolution}

\begin{abstract}
By means of the derivative operator and Chu-Vandermonde convolution,
four families of summation formulas involving harmonic numbers with
even or odd indexes are established.
\end{abstract}

\maketitle\thispagestyle{empty}
\markboth{C. Wei, D. Gong, Lily L. Liu}
         {Harmonic numbers}

\section{Introduction}

For a nonnegative integer $n$, define harmonic numbers to be
\[H_{0}=0\quad\text{and}\quad
H_{n}=\sum_{k=1}^n\frac{1}{k} \quad\text{when}\quad
n\in\mathbb{N},\] which arise from truncation of the harmonic series
\[\sum_{k=1}^{\infty}\frac{1}{k}.\]
 Harmonic numbers are important in many branches of number theory and appear in the expressions
of various special functions.

 For a differentiable function $f(x)$, define the derivative operator
$\mathcal{D}_x$ by
 \bnm
&&\mathcal{D}_xf(x)=\frac{d}{dx}f(x)\Big|_{x=0}.
 \enm
 Then it is not difficult to show that
$$\mathcal{D}_x\:\binm{x+r}{s}=\binm{r}{s}\big\{H_r-H_{r-s}\big\},$$
where $r,s\in\mathbb{N}_0$ with $s\leq r$.

As pointed out by Richard Askey (cf. \cito{andrews}), expressing
harmonic numbers in accordance with differentiation of binomial
coefficients can be traced back to Issac Newton.
 In 2003, Paule and Schneider
\cito{paule} computed the family of series:
 \bnm
W_n(\alpha)=\sum_{k=0}^n\binm{n}{k}^{\alpha}\{1+\alpha(n-2k)H_k\}
 \enm
with $\alpha=1,2,3,4,5$ by combining this way with Zeilberger's
algorithm for definite hypergeometric sums. According to the
derivative operator and the hypergeometric form of Andrews'
$q$-series transformation, Krattenthaler and Rivoal
\cito{krattenthaler} deduced general Paule-Schneider type identities
with $\alpha$ being a positive integer.  More results from
differentiation of binomial coefficients can be seen in the papers
\cite{kn:sofo-a,kn:wang-b,kn:wei}. For different ways and related
harmonic number identities, the reader may refer to
\cite{kn:ahlgren,kn:chen,kn:choi,kn:liu,kn:schneider,kn:sofo-b,kn:wang-a,kn:xu}.
It should be mentioned that Sun \cite{kn:sun-a,kn:sun-b} showed
recently some congruence relations concerning harmonic numbers to
us.

There are numerous binomial identities in the literature. Thereinto,
Chu-Vandermonde convolution (cf. \citu{andrews-r}{p. 67}) can be
stated as
 \bmn\label{vandermonde}
 \quad\sum_{k=0}^n\binm{x}{k}\binm{y}{n-k}=\binm{x+y}{n}.
 \emn

Inspired by the works just mentioned, we shall establish, in terms
of the derivative operator and \eqref{vandermonde}, closed
expressions for the following four families of series with even or
odd indexes:
 \bnm
 &&\sum_{k=0}^{n}(-4)^k\frac{\binm{n}{k}}{\binm{2k}{k}}k^tH_{2k},\\
 &&\sum_{k=0}^{n}(-4)^k\frac{\binm{n}{k}}{\binm{1+2k}{k}}k^tH_{1+2k},\\
&&\sum_{k=0}^{n}\bigg(-\frac{1}{4}\bigg)^k\binm{n}{k}\binm{2k}{k}k^tH_{2k},\\
 &&\sum_{k=0}^{n}\bigg(-\frac{1}{4}\bigg)^k\binm{n}{k}\binm{1+2k}{k}k^tH_{1+2k},
 \enm
where $t\in\mathbb{N}_0$. In order to avoid appearance of
complicated expressions, our explicit formulas are offered only for
$t=0,1,2$.
\section{The first family of summation formulas involving harmonic numbers}
\begin{thm}\label{thm-a}
\bnm
 \qqdn\sum_{k=0}^{n}(-4)^k\frac{\binm{n}{k}}{\binm{2k}{k}}H_{2k}
 =\frac{2H_{2n}-H_n}{2(2n-1)}-\frac{4n}{(2n-1)^2}.
 \enm
\end{thm}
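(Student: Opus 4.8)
The plan is to use the derivative operator $\mathcal{D}_x$ applied to a parametrized version of the summand, exactly as the authors signal in their introduction. The key observation is that $H_{2k}$ arises from differentiation of a binomial coefficient via the stated identity $\mathcal{D}_x\binom{x+r}{s}=\binom{r}{s}\{H_r-H_{r-s}\}$. Concretely, I would seek a single-parameter family whose $\mathcal{D}_x$-derivative reproduces $H_{2k}$ (up to a correction that is itself a closed form), and whose undifferentiated value is summable by Chu-Vandermonde \eqref{vandermonde}.

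First I would establish the auxiliary closed-form evaluation
\begin{equation}
\sum_{k=0}^{n}(-4)^k\frac{\binom{n}{k}}{\binom{2k}{k}}=\frac{2n+1}{2n+1},\nonumber
\end{equation}
or more precisely the correct rational evaluation of this $t=0$, $H$-free sum; this is the base case that Chu-Vandermonde delivers after writing $1/\binom{2k}{k}$ in a form amenable to the convolution. The standard route is to recognize $(-4)^k\binom{n}{k}/\binom{2k}{k}$ in terms of a terminating ${}_2F_1$ or via the Beta-integral/duplication relation $\binom{2k}{k}=4^k\binom{k-1/2}{k}$, turning the sum into a Vandermonde-summable expression. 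I would then introduce the parameter by replacing a suitable argument with $x$, so that the summand becomes a product of binomials $\binom{\cdot+x}{\cdot}$ whose $x$-sum is again evaluable by \eqref{vandermonde} in closed form as a function of $x$.

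Next I would apply $\mathcal{D}_x$ to both sides of that parametrized identity. On the left, differentiation produces the harmonic-number factor $H_{2k}$ (together with lower-order harmonic pieces such as $H_k$ or constants coming from the $\binom{2k}{k}$ denominator, each of which is controlled by the stated derivative formula). On the right, $\mathcal{D}_x$ acts on the explicit rational/binomial closed form in $x$; since $\mathcal{D}_x$ is just $\frac{d}{dx}\big|_{x=0}$, this is an elementary logarithmic-derivative computation yielding the terms $\frac{2H_{2n}-H_n}{2(2n-1)}$ and $-\frac{4n}{(2n-1)^2}$. Matching the two sides and solving for the target sum $\sum_{k}(-4)^k\binom{n}{k}H_{2k}/\binom{2k}{k}$ gives the claimed formula.

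The main obstacle I expect is the bookkeeping in the second step: choosing the parametrization so that $\mathcal{D}_x$ isolates $H_{2k}$ cleanly rather than a messy combination of $H_{2k}$, $H_k$, and $H_{k-1/2}$-type quantities. Getting $H_{2k}$ with the right coefficient — and ensuring that the extraneous harmonic contributions either cancel or re-sum into the already-known $H$-free evaluation from the base case — is the delicate part. The differentiation of the closed-form right-hand side is routine calculus, but one must be careful with the $\frac{1}{2n-1}$ factor, which on differentiation generates the $\frac{1}{(2n-1)^2}$ term, so the pole structure in $x$ of the Vandermonde evaluation has to be tracked precisely to recover both summands on the right.
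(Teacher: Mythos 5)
Your plan is essentially the paper's proof: parametrize Chu--Vandermonde with half-integer arguments (the paper takes $a=\frac{x}{2}$, $b=\frac{-x-1}{2}$ in $\sum_{k=0}^n(-1)^k\binom{a+k}{k}\binom{b+n}{n-k}=\binom{b-a-1+n}{n}$), apply $\mathcal{D}_x$, convert via $\binom{n-1/2}{n-k}=4^{k-n}\binom{n}{k}\binom{2n}{n}\big/\binom{2k}{k}$ together with $\frac{1}{2}H_k+\frac{1}{2}\sum_{i=1}^{k}\frac{1}{i-1/2}=H_{2k}$, and absorb the $k$-independent harmonic leftover into the $H$-free base sum --- exactly the mechanism you describe. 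The only blemish is your placeholder $\frac{2n+1}{2n+1}$ for that base sum, whose correct value is $\frac{1}{1-2n}$; otherwise the steps you flag as delicate work out just as you anticipate.
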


\begin{proof}
Perform the replacements $x\to-a-1$ and $y\to b+n$ in
\eqref{vandermonde} to obtain
 \bmn\label{chu-vandermonde}
 &&\xxqdn\qqdn\qdn\sum_{k=0}^n(-1)^k\binm{a+k}{k}\binm{b+n}{n-k}=\binm{b-a-1+n}{n}.
 \emn
 The case $a=\frac{x}{2}$, $b=\frac{-x-1}{2}$ of
 it reads as
 \bnm
 \quad\sum_{k=0}^n(-1)^k\binm{\frac{x}{2}+k}{k}\binm{\frac{-x-1}{2}+n}{n-k}=\binm{-x-\frac{3}{2}+n}{n}.
 \enm
 Applying the derivative operator $\mathcal{D}_x$ to both sides of
the last equation, we get
 \bmn\label{harmornic-a}
 &&\sum_{k=0}^{n}(-1)^k\binm{-\frac{1}{2}+n}{n-k}
  \bigg\{\frac{1}{2}H_k+\frac{1}{2}\sum_{i=1}^k\frac{1}{i-\frac{1}{2}}-\frac{1}{2}\sum_{i=1}^n\frac{1}{i-\frac{1}{2}}\bigg\}
   \nnm\\
&&=-\binm{-\frac{3}{2}+n}{n}\sum_{i=1}^n\frac{1}{i-\frac{3}{2}}.
 \emn
By means of the two relations
 \bmn\label{relation-a}
\binm{-\frac{1}{2}+n}{n-k}=4^{k-n}\binm{n}{k}\frac{\binm{2n}{n}}{\binm{2k}{k}},
 \\\label{relation-b}
\frac{1}{2}H_k+\frac{1}{2}\sum_{i=1}^k\frac{1}{i-\frac{1}{2}}=H_{2k},
 \emn
\eqref{harmornic-a} can be manipulated as
\bnm
 \quad\sum_{k=0}^{n}(-4)^k\frac{\binm{n}{k}}{\binm{2k}{k}}H_{2k}
 =\frac{2H_{2n}-H_n}{2n-1}-\frac{4n}{(2n-1)^2}+\frac{2H_{2n}-H_n}{2}\sum_{k=0}^{n}(-4)^k\frac{\binm{n}{k}}{\binm{2k}{k}}.
 \enm
 Calculating the series on the right hand side by \eqref{chu-vandermonde}, we gain Theorem \ref{thm-a}.
\end{proof}

\begin{thm}\label{thm-b}
 \bnm
 \qquad\sum_{k=0}^{n}(-4)^k\frac{\binm{n}{k}}{\binm{2k}{k}}kH_{2k}
 =\frac{n\{H_{n}-2H_{2n}\}}{(2n-1)(2n-3)}+\frac{n(20n^2-24n-1)}{(2n-1)^2(2n-3)^2}.
 \enm
\end{thm}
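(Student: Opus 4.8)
The plan is to reduce the sum to the already-proven Theorem~\ref{thm-a} by converting the extra factor $k$ into a shift of the summation index, rather than by re-running the derivative-operator machinery. First I would use the relation \eqref{relation-a} to rewrite the central-binomial quotient as a single binomial coefficient, so that
\[
\sum_{k=0}^{n}(-4)^k\frac{\binm{n}{k}}{\binm{2k}{k}}kH_{2k}
=\frac{4^n}{\binm{2n}{n}}\sum_{k=0}^{n}(-1)^k\binm{-\tfrac12+n}{n-k}\,kH_{2k}.
\]
The advantage of this form is that the weight $k$ can be split symmetrically as $k=n-(n-k)$, which separates the sum into two pieces, each of the type handled by Theorem~\ref{thm-a}.

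For the first piece, $n\sum_{k}(-1)^k\binm{-\frac12+n}{n-k}H_{2k}$, the inner sum is, up to the prefactor $4^n/\binm{2n}{n}$, exactly the left-hand side of Theorem~\ref{thm-a}, so it contributes $n\,T(n)$, where $T(n)$ denotes the right-hand side of Theorem~\ref{thm-a}. For the second piece I would apply the down-shift $(n-k)\binm{-\frac12+n}{n-k}=(n-\tfrac12)\binm{-\frac12+(n-1)}{(n-1)-k}$, an instance of $m\binm{\alpha}{m}=\alpha\binm{\alpha-1}{m-1}$; the $k=n$ term drops out, and what remains is precisely the Theorem~\ref{thm-a} sum with $n$ replaced by $n-1$. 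Collecting the prefactors and using $\binm{2n}{n}=\tfrac{2(2n-1)}{n}\binm{2n-2}{n-1}$, the constant $4(n-\tfrac12)\binm{2n-2}{n-1}/\binm{2n}{n}$ collapses exactly to $n$, and the whole sum becomes
\[
\sum_{k=0}^{n}(-4)^k\frac{\binm{n}{k}}{\binm{2k}{k}}kH_{2k}=n\bigl\{T(n)-T(n-1)\bigr\}.
\]

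It then remains only to evaluate this forward difference. Here I would substitute $T(m)=\frac{2H_{2m}-H_m}{2(2m-1)}-\frac{4m}{(2m-1)^2}$ and reduce the level-$n$ harmonic numbers to level $n-1$ via $H_{2n}=H_{2n-2}+\frac{1}{2n-1}+\frac{1}{2n}$ and $H_n=H_{n-1}+\frac1n$. The harmonic contributions at the two levels combine into the single term $\frac{n\{H_n-2H_{2n}\}}{(2n-1)(2n-3)}$, while the purely rational remainder, once placed over the common denominator $(2n-1)^2(2n-3)^2$, collapses to $\frac{n(20n^2-24n-1)}{(2n-1)^2(2n-3)^2}$, matching the claim.

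I expect the main obstacle to be bookkeeping rather than anything conceptual. The delicate points are tracking the $4^n/\binm{2n}{n}$ prefactor through the index shift, verifying that it really reduces to the clean coefficient $n$, and then carrying out the rational-function algebra in the forward difference carefully: one needs the cubic terms in $n$ to cancel so that the numerator reduces to $20n^2-24n-1$, and a single sign or power slip would ruin that cancellation.
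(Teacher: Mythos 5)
Your argument is correct, and it takes a genuinely different route from the paper. The paper proves Theorem~\ref{thm-b} by first deriving the weighted convolution \eqref{chu-vandermonde-a}, specializing $a=\frac{x}{2}$, $b=\frac{-x-1}{2}$, and applying the derivative operator $\mathcal{D}_x$ a second time, exactly as in the proof of Theorem~\ref{thm-a}. You instead bypass the derivative operator entirely: after converting the summand via \eqref{relation-a}, the split $k=n-(n-k)$ together with the absorption identity $(n-k)\binm{n-\frac12}{n-k}=(n-\frac12)\binm{n-\frac32}{n-1-k}$ reduces everything to the Theorem~\ref{thm-a} sum at levels $n$ and $n-1$, and I have checked that the prefactor $4(n-\frac12)\binm{2n-2}{n-1}/\binm{2n}{n}$ does collapse to $n$, so that the sum equals $n\{T(n)-T(n-1)\}$; the forward difference of $T$ then yields $\frac{n\{H_n-2H_{2n}\}}{(2n-1)(2n-3)}$ from the harmonic part and, after clearing denominators, the numerator $-4n(2n-3)^2+(2n-1)(2n-3)+4(n-1)(2n-1)^2=20n^2-24n-1$, matching the claim (the degenerate case $n=0$ is a trivial $0=0$ check). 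What each approach buys: the paper's method is systematic, since each higher power $k^t$ only requires one new closed-form evaluation \eqref{chu-vandermonde-b}, \eqref{chu-vandermonde-e}, etc., followed by the same differentiation step; your method is more elementary given Theorem~\ref{thm-a} and avoids re-running the derivative computation, but extending it to $k^2$ (Theorem~\ref{thm-c}) would require an iterated difference $n^2T(n)-(2n-1)(n-\frac12)\,(\cdots)+\cdots$ down two levels, with correspondingly heavier bookkeeping.
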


\begin{proof}
 It is ordinary to find that
 \bnm
 &&\xxqdn\sum_{k=0}^n(-1)^kk\binm{a+k}{k}\binm{b+n}{n-k}\\
 &&\xxqdn\:=\:(a+1)\sum_{k=1}^n(-1)^k\binm{a+k}{k-1}\binm{b+n}{n-k}\\
 &&\xxqdn\:=\:-(a+1)\sum_{k=0}^{n-1}(-1)^k\binm{a+1+k}{k}\binm{b+n}{n-1-k}.
 \enm
Evaluate the series on the right hand side by
\eqref{chu-vandermonde} to achieve
 \bmn\label{chu-vandermonde-a}
\qquad\sum_{k=0}^n(-1)^kk\binm{a+k}{k}\binm{b+n}{n-k}=\frac{(a+1)n}{a+1-b-n}\binm{b-a-1+n}{n}.
 \emn
 The case $a=\frac{x}{2}$, $b=\frac{-x-1}{2}$ of
 it is
 \bnm
 \qquad\quad\sum_{k=0}^n(-1)^kk\binm{\frac{x}{2}+k}{k}\binm{\frac{-x-1}{2}+n}{n-k}=\frac{(x+2)n}{2x+3-2n}\binm{-x-\frac{3}{2}+n}{n}.
 \enm
 Applying the derivative operator $\mathcal{D}_x$ to both sides of
the last equation, we have
 \bnm
 &&\xxqdn\qdn\sum_{k=0}^{n}(-1)^kk\binm{-\frac{1}{2}+n}{n-k}
  \bigg\{\frac{1}{2}H_k+\frac{1}{2}\sum_{i=1}^k\frac{1}{i-\frac{1}{2}}-\frac{1}{2}\sum_{i=1}^n\frac{1}{i-\frac{1}{2}}\bigg\}
   \nnm\\
&&\xxqdn\qdn\:=\:\frac{2n}{2n-3}\binm{-\frac{3}{2}+n}{n}\sum_{i=1}^n\frac{1}{i-\frac{3}{2}}-\frac{n(2n+1)}{(2n-3)^2}\binm{-\frac{3}{2}+n}{n}.
 \enm
In accordance with \eqref{relation-a} and \eqref{relation-b}, the
last equation can be restated as
\bnm
 \:\sum_{k=0}^{n}(-4)^k\frac{\binm{n}{k}}{\binm{2k}{k}}kH_{2k}
 &&\xqdn\!=\frac{n(20n^2-24n-1)}{(2n-1)^2(2n-3)^2}-\frac{2n\{2H_{2n}-H_n\}}{(2n-1)(2n-3)}\\
 &&\xqdn\!+\:
 \frac{2H_{2n}-H_n}{2}\sum_{k=0}^{n}(-4)^k\frac{\binm{n}{k}}{\binm{2k}{k}}k.
 \enm
 Computing the series on the right hand side by \eqref{chu-vandermonde-a}, we attain Theorem \ref{thm-b}.
\end{proof}

\begin{thm}\label{thm-c}
 \bnm
 \sum_{k=0}^{n}(-4)^k\frac{\binm{n}{k}}{\binm{2k}{k}}k^2H_{2k}
 &&\xqdn=\:\frac{n(2n+1)\{2H_{2n}-H_n\}}{(2n-1)(2n-3)(2n-5)}\\
 &&\xqdn-\:\:\frac{n(96n^4-280n^3+60n^2+182n-13)}{(2n-1)^2(2n-3)^2(2n-5)^2}.
 \enm
\end{thm}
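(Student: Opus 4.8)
The plan is to follow the template of the proofs of Theorems \ref{thm-a} and \ref{thm-b}, pushing the reduction of $k^t\binm{a+k}{k}$ one step further to accommodate $t=2$. First I would establish the $k^2$-analogue of \eqref{chu-vandermonde-a}. Writing $k^2=k(k-1)+k$ and using the falling-factorial reductions $k(k-1)\binm{a+k}{k}=(a+1)(a+2)\binm{a+k}{k-2}$ and $k\binm{a+k}{k}=(a+1)\binm{a+k}{k-1}$, I would reindex each resulting sum and evaluate it by \eqref{chu-vandermonde} (for the $k(k-1)$ part) and by \eqref{chu-vandermonde-a} (for the $k$ part). This should yield the closed form
\bnm
&&\xxqdn\sum_{k=0}^n(-1)^kk^2\binm{a+k}{k}\binm{b+n}{n-k}\\
&&\xxqdn\:=\:(a+1)(a+2)\binm{b-a-3+n}{n-2}+\frac{(a+1)n}{a+1-b-n}\binm{b-a-1+n}{n}.
\enm

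Next I would take the case $a=\frac{x}{2}$, $b=\frac{-x-1}{2}$ and apply the derivative operator $\mathcal{D}_x$ to both sides, exactly as in the previous two proofs. On the left-hand side the product rule, followed by \eqref{relation-a} and \eqref{relation-b}, reproduces the now-familiar pattern: it produces the target sum together with the residual term $\frac{2H_{2n}-H_n}{2}\sum_{k=0}^n(-4)^k\frac{\binm{n}{k}}{\binm{2k}{k}}k^2$, both carrying the common factor $4^{-n}\binm{2n}{n}$. The residual can be disposed of at the end by evaluating $\sum_{k=0}^n(-4)^k\frac{\binm{n}{k}}{\binm{2k}{k}}k^2$ through the specialization of the closed form just derived.

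The genuinely new work lies in differentiating the two-term right-hand side by the product and quotient rules. The first term contributes the half-integer binomial $\binm{-\frac{7}{2}+n}{n-2}$ and a harmonic sum of the type $\sum_i\frac{1}{i-\frac{7}{2}}$, while the second contributes $\binm{-\frac{3}{2}+n}{n}$ and $\sum_i\frac{1}{i-\frac{3}{2}}$, alongside the rational pieces coming from differentiating the prefactors. The main obstacle is therefore the simplification of this right-hand side rather than any conceptual step: I would reduce both half-integer binomials to rational multiples of $4^{-n}\binm{2n}{n}$ — using $\binm{-\frac{3}{2}+n}{n}=\frac{-1}{2n-1}4^{-n}\binm{2n}{n}$ and, analogously, $\binm{-\frac{7}{2}+n}{n-2}=\frac{-4n(n-1)}{(2n-1)(2n-3)(2n-5)}4^{-n}\binm{2n}{n}$ — and rewrite each half-integer harmonic sum in terms of $2H_{2n}-H_n=\sum_{i=1}^n\frac{1}{i-\frac{1}{2}}$ plus finite rational corrections such as $\frac{2}{2n-1}$, $\frac{2}{2n-3}$, $\frac{2}{2n-5}$.

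After cancelling the common factor $4^{-n}\binm{2n}{n}$ and substituting the evaluated residual sum, the several half-integer harmonic contributions should collapse onto a single multiple of $2H_{2n}-H_n$, and the remaining rational terms should collect over the common denominator $(2n-1)^2(2n-3)^2(2n-5)^2$, delivering Theorem \ref{thm-c}. The care required is entirely in this bookkeeping — tracking three distinct half-integer denominators through the quotient-rule derivatives — since both of the underlying binomial identities are routine extensions of those already used in Theorems \ref{thm-a} and \ref{thm-b}.
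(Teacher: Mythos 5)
Your proposal is correct and follows essentially the same route as the paper: the paper also derives a closed form for $\sum_{k=0}^n(-1)^kk^2\binm{a+k}{k}\binm{b+n}{n-k}$ (its \eqref{chu-vandermonde-b}) by reducing to \eqref{chu-vandermonde} and \eqref{chu-vandermonde-a}, specializes $a=\frac{x}{2}$, $b=\frac{-x-1}{2}$, applies $\mathcal{D}_x$, and disposes of the residual $\frac{2H_{2n}-H_n}{2}\sum_k(-4)^k\binm{n}{k}\binm{2k}{k}^{-1}k^2$ exactly as you describe. The only cosmetic difference is that your two-term expression $(a+1)(a+2)\binm{b-a-3+n}{n-2}+\frac{(a+1)n}{a+1-b-n}\binm{b-a-1+n}{n}$ is the unsimplified form of the paper's single-term $\frac{n(a+1)(an+n-b)}{(b-a-2+n)(b-a-1+n)}\binm{b-a-1+n}{n}$, so the paper differentiates one binomial where you differentiate two.
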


\begin{proof}
 It is routine to verify that
 \bnm
 &&\xxqdn\sum_{k=0}^n(-1)^kk^2\binm{a+k}{k}\binm{b+n}{n-k}\\
 &&\xxqdn\:=\:(a+1)\sum_{k=1}^n(-1)^kk\binm{a+k}{k-1}\binm{b+n}{n-k}\\
 &&\xxqdn\:=\:-(a+1)\sum_{k=0}^{n-1}(-1)^k(k+1)\binm{a+1+k}{k}\binm{b+n}{n-1-k}\\
&&\xxqdn\:=\:-(a+1)\sum_{k=0}^{n-1}(-1)^k\binm{a+1+k}{k}\binm{b+n}{n-1-k}\\
&&\xqdn-\:(a+1)\sum_{k=0}^{n-1}(-1)^kk\binm{a+1+k}{k}\binm{b+n}{n-1-k}.\\
 \enm
Calculate respectively the two series on the right hand side by
\eqref{chu-vandermonde} and \eqref{chu-vandermonde-a} to obtain
 \bmn\label{chu-vandermonde-b}
\quad\sum_{k=0}^n(-1)^kk^2\binm{a+k}{k}\binm{b+n}{n-k}=\frac{n(a+1)(an+n-b)}{(b-a-2+n)(b-a-1+n)}\binm{b-a-1+n}{n}.
 \emn
The case $a=\frac{x}{2}$, $b=\frac{-x-1}{2}$ of
 it can be written as
\bnm
 \quad\sum_{k=0}^n(-1)^kk^2\binm{\frac{x}{2}+k}{k}\binm{\frac{-x-1}{2}+n}{n-k}=\frac{n(x+2)(1+2n+x+nx)}{(2x+3-2n)(2x+5-2n)}\binm{-x-\frac{3}{2}+n}{n}.
 \enm
 Applying the derivative operator $\mathcal{D}_x$ to both sides of
the last equation, we get
 \bnm
 &&\xqdn\sum_{k=0}^{n}(-1)^kk^2\binm{-\frac{1}{2}+n}{n-k}
  \bigg\{\frac{1}{2}H_k+\frac{1}{2}\sum_{i=1}^k\frac{1}{i-\frac{1}{2}}-\frac{1}{2}\sum_{i=1}^n\frac{1}{i-\frac{1}{2}}\bigg\}
   \nnm\\
&&\xqdn\:\:=\frac{n(16n^3-20n^2-36n+13)}{(2n-3)^2(2n-5)^2}\binm{-\frac{3}{2}+n}{n}-
\frac{2n(2n+1)}{(2n-3)(2n-5)}\binm{-\frac{3}{2}+n}{n}\sum_{i=1}^n\frac{1}{i-\frac{3}{2}}.
 \enm
According to \eqref{relation-a} and \eqref{relation-b}, the last
equation can be reformulated as
 \bnm
 &&\xqdn\sum_{k=0}^{n}(-4)^k\frac{\binm{n}{k}}{\binm{2k}{k}}k^2H_{2k}
 =\frac{2n(2n+1)\{2H_{2n}-H_n\}}{(2n-1)(2n-3)(2n-5)}\\
 &&\xqdn\:\:-\:\frac{n(96n^4-280n^3+60n^2+182n-13)}{(2n-1)^2(2n-3)^2(2n-5)^2}
 +\frac{2H_{2n}-H_n}{2}\sum_{k=0}^{n}(-4)^k\frac{\binm{n}{k}}{\binm{2k}{k}}k^2.
 \enm
Evaluating the series on the right hand side by
\eqref{chu-vandermonde-b}, we gain Theorem \ref{thm-c}.
\end{proof}

\section{The second family of summation formulas involving harmonic numbers}
\begin{thm}\label{thm-d}
 \bnm
 \sum_{k=0}^{n}(-4)^k\frac{\binm{n}{k}}{\binm{1+2k}{k}}H_{1+2k}
 =\frac{2H_{1+2n}-H_n}{2(4n^2-1)}-\frac{4n^3+8n^2+7n-2}{(4n^2-1)^2}.
 \enm
\end{thm}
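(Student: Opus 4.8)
The plan is to imitate the proof of Theorem \ref{thm-a}, but the passage from $\binm{2k}{k}$ to $\binm{1+2k}{k}$ in the denominator forces one extra structural idea. A single half-integer binomial no longer reproduces the summand: one checks that $\binm{\frac{1}{2}+n}{n-k}$ differs from $\frac{\binm{n}{k}}{\binm{1+2k}{k}}$ by a factor $\tfrac{1}{k+1}$ (besides a power of $4$ and a $k$-free constant). To cancel this I would build a weight $k+1$ into the convolution from the start, exactly as Theorem \ref{thm-b} uses the $k$-weighted identity. Adding \eqref{chu-vandermonde} to \eqref{chu-vandermonde-a} gives
\bnm
&&\sum_{k=0}^n(-1)^k(k+1)\binm{a+k}{k}\binm{b+n}{n-k}\\
&&\:=\:\frac{(a+1)(n+1)-b-n}{a+1-b-n}\binm{b-a-1+n}{n},
\enm
which I would specialize at $a=\frac{x}{2}$, $b=\frac{1-x}{2}$.

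Applying $\mathcal{D}_x$, the explicit factor $k+1$ is inert, while differentiating $\binm{\frac{x}{2}+k}{k}\binm{\frac{1-x}{2}+n}{n-k}$ leaves the common factor $(k+1)\binm{\frac{1}{2}+n}{n-k}$ multiplied by the bracket $\frac{1}{2}H_k-\frac{1}{2}\sum_{j=k+1}^n\frac{1}{j+\frac{1}{2}}$. Two relations then convert this into the target summand: the analogue of \eqref{relation-a},
\[(k+1)\binm{\tfrac{1}{2}+n}{n-k}=(n+1)\binm{1+2n}{n}\,4^{k-n}\frac{\binm{n}{k}}{\binm{1+2k}{k}},\]
and the analogue of \eqref{relation-b}, namely $\frac{1}{2}H_k+\frac{1}{2}\sum_{i=1}^{k+1}\frac{1}{i-\frac{1}{2}}=H_{1+2k}$; applied to the truncated odd sum produced by the differentiation, the $k$-dependent part of the bracket collapses exactly to $H_{1+2k}-1$. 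After pulling out the $k$-free harmonic terms, the left-hand side becomes a constant multiple of $\sum_{k=0}^n(-4)^k\frac{\binm{n}{k}}{\binm{1+2k}{k}}H_{1+2k}$ plus a multiple of the plain series $\sum_{k=0}^n(-4)^k\frac{\binm{n}{k}}{\binm{1+2k}{k}}$.

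That plain series I would evaluate simply by reading off the $x=0$ value (with no differentiation) of the weighted identity above, which yields $\sum_{k=0}^n(-4)^k\frac{\binm{n}{k}}{\binm{1+2k}{k}}=\frac{-1}{4n^2-1}$; this is where the factors $2n-1$ and $2n+1$ first appear. It then remains to differentiate the right-hand side $\frac{(a+1)(n+1)-b-n}{a+1-b-n}\binm{b-a-1+n}{n}$ at $x=0$ by logarithmic differentiation, and to re-express the emerging half-integer sum $\sum_{i=1}^n\frac{1}{i-\frac{1}{2}}$ through $H_{1+2n}$ and $H_n$. Solving the resulting linear equation for the target series should produce Theorem \ref{thm-d}.

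The main obstacle is precisely the step separating this family from the first. Because $\binm{1+2k}{k}$, unlike $\binm{2k}{k}$, is not delivered by a lone half-integer binomial, one is compelled to differentiate the bulkier rational-times-binomial right-hand side of the $(k+1)$-weighted convolution and then carry out the bookkeeping that turns half-integer data into the odd-indexed $H_{1+2n}$ with the correct $4n^2-1$ and $(4n^2-1)^2$ denominators. A related subtlety is that the weight $k+1$ must be introduced explicitly rather than through the $a$-factor: absorbing it into $\binm{1+k}{k}$ would let differentiation contaminate the bracket with a spurious $\frac{1}{2(k+1)}$, replacing the clean $H_{1+2k}$ by the even-indexed $H_{2k+2}$ and saddling the proof with the additional series $\sum_{k=0}^n(-4)^k\frac{\binm{n}{k}}{(2k+1)\binm{2k}{k}}$ to evaluate.
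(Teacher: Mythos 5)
Your proposal is correct and follows essentially the same route as the paper: the $(k+1)$-weighted convolution you form by adding \eqref{chu-vandermonde} and \eqref{chu-vandermonde-a} is exactly the paper's \eqref{chu-vandermonde-c}, the specialization $a=\frac{x}{2}$, $b=\frac{1-x}{2}$ and the two conversion relations you state are the paper's \eqref{relation-c} and \eqref{relation-d}, and the residual plain series is evaluated by the same weighted identity. The structural point you flag --- that the weight $k+1$ must be carried explicitly so that differentiation does not contaminate the bracket --- is precisely how the paper sets it up.
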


\begin{proof}
Use \eqref{chu-vandermonde} and \eqref{chu-vandermonde-a} to achieve
  \bmn\label{chu-vandermonde-c}
\quad\sum_{k=0}^n(-1)^k(1+k)\binm{a+k}{k}\binm{b+n}{n-k}=\frac{b-a-1-an}{b-a-1+n}\binm{b-a-1+n}{n}.
 \emn
The case $a=\frac{x}{2}$, $b=\frac{1-x}{2}$ of
 it reads as
 \bnm
 \:\:\sum_{k=0}^n(-1)^k(1+k)\binm{\frac{x}{2}+k}{k}\binm{\frac{1-x}{2}+n}{n-k}=\frac{1+2x+nx}{1+2x-2n}\binm{-x-\frac{1}{2}+n}{n}.
 \enm
 Applying the derivative operator $\mathcal{D}_x$ to both sides of
the last equation, we have
 \bmn\label{harmornic-b}
 &&\xqdn\xxqdn\sum_{k=0}^{n}(-1)^k(1+k)\binm{\frac{1}{2}+n}{n-k}
  \bigg\{\frac{1}{2}H_k+\frac{1}{2}\sum_{i=1}^k\frac{1}{i+\frac{1}{2}}-\frac{1}{2}\sum_{i=1}^n\frac{1}{i+\frac{1}{2}}\bigg\}
   \nnm\\
&&\xqdn\xxqdn\:=\frac{1}{2n-1}\binm{-\frac{1}{2}+n}{n}\sum_{i=1}^n\frac{1}{i-\frac{1}{2}}-\frac{n(2n+3)}{(2n-1)^2}\binm{-\frac{1}{2}+n}{n}.
 \emn
In terms of the two relations
 \bmn
&&\xxqdn
(1+k)\binm{\frac{1}{2}+n}{n-k}=(1+n)4^{k-n}\binm{n}{k}\frac{\binm{1+2n}{n}}{\binm{1+2k}{k}},
 \label{relation-c}\\\label{relation-d}
&&\qqdn
\frac{1}{2}H_k+\frac{1}{2}\sum_{i=1}^k\frac{1}{i+\frac{1}{2}}=H_{1+2k}-1,
\emn
 \eqref{harmornic-b} can be manipulated as
 \bnm
\xqdn\qqdn\sum_{k=0}^{n}(-4)^k\frac{\binm{n}{k}}{\binm{1+2k}{k}}H_{1+2k}
 &&\xqdn\!=\frac{2-7n-8n^2-4n^3}{(4n^2-1)^2}+\frac{2H_{1+2n}-H_n}{4n^2-1}\\
&&\xqdn\!+\frac{2H_{1+2n}-H_n}{2}\sum_{k=0}^{n}(-4)^k\frac{\binm{n}{k}}{\binm{1+2k}{k}}.
 \enm
 Computing the series on the right hand side by \eqref{chu-vandermonde-c}, we attain Theorem \ref{thm-d}.
\end{proof}

\begin{thm}\label{thm-e}
 \bnm
 \xxqdn\:\sum_{k=0}^{n}(-4)^k\frac{\binm{n}{k}}{\binm{1+2k}{k}}kH_{1+2k}
 &&\xqdn=\:\frac{2n\{H_{n}-2H_{1+2n}\}}{(4n^2-1)(2n-3)}\\
 &&\xqdn+\:\:\frac{2n(8n^4+20n^3-2n^2-53n+8)}{(4n^2-1)^2(2n-3)^2}.
 \enm
\end{thm}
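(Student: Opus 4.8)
The plan is to reproduce the proof of Theorem~\ref{thm-d} essentially verbatim, the only new ingredient being an auxiliary Chu--Vandermonde evaluation that carries the extra factor $k$. Because the denominator $\binm{1+2k}{k}$ is produced through \eqref{relation-c}, which supplies a factor $1+k$, the weight $k\binm{n}{k}/\binm{1+2k}{k}$ corresponds on the Chu--Vandermonde side to the factor $k(1+k)$. So first I need a closed form for
\[
\sum_{k=0}^n(-1)^kk(1+k)\binm{a+k}{k}\binm{b+n}{n-k}.
\]

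Since $k(1+k)=k+k^2$, this sum is the term-by-term addition of \eqref{chu-vandermonde-a} and \eqref{chu-vandermonde-b}. Pulling out the common $\binm{b-a-1+n}{n}$ and using $a+1-b-n=-(b-a-1+n)$ to collapse the two fractions, I obtain
\[
\sum_{k=0}^n(-1)^kk(1+k)\binm{a+k}{k}\binm{b+n}{n-k}
=\frac{n(a+1)(a+2-2b+an)}{(b-a-1+n)(b-a-2+n)}\binm{b-a-1+n}{n}.
\]
Next I specialize $a=\tfrac{x}{2}$, $b=\tfrac{1-x}{2}$, exactly as in Theorem~\ref{thm-d}, so that $b-a-1+n=n-\tfrac12-x$ and the right-hand side becomes a rational function of $x$ times $\binm{-x-\frac12+n}{n}$. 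Then I apply $\mathcal{D}_x$ to both sides. On the left the factor $k(1+k)$ is inert under differentiation, so the same harmonic bracket $\frac{1}{2}H_k+\frac{1}{2}\sum_{i=1}^k\frac{1}{i+1/2}-\frac{1}{2}\sum_{i=1}^n\frac{1}{i+1/2}$ appears as in \eqref{harmornic-b}.

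Invoking \eqref{relation-c} to rewrite $k(1+k)\binm{\frac12+n}{n-k}$ and \eqref{relation-d} to collapse the bracket into $H_{1+2k}-\big(H_{1+2n}-\tfrac{1}{2}H_n\big)$, the left-hand side turns into a constant multiple of
\[
\sum_{k=0}^{n}(-4)^k\frac{\binm{n}{k}}{\binm{1+2k}{k}}kH_{1+2k}
-\frac{2H_{1+2n}-H_n}{2}\sum_{k=0}^{n}(-4)^k\frac{\binm{n}{k}}{\binm{1+2k}{k}}k.
\]
The trailing $H$-free sum is evaluated by reusing the auxiliary identity just derived at $x=0$ (that is, $a=0$, $b=\tfrac12$); after cancelling $\binm{1+2n}{n}=\tfrac{2n+1}{n+1}\binm{2n}{n}$ this gives $\sum_{k=0}^n(-4)^kk\binm{n}{k}/\binm{1+2k}{k}=\frac{4n}{(4n^2-1)(2n-3)}$. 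Solving for the target sum and feeding in this value supplies the $\tfrac{2H_{1+2n}-H_n}{2}$-weighted contribution to the harmonic part.

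The main obstacle is purely computational: carrying out $\mathcal{D}_x$ on the right-hand side. Writing it as $R(x)\binm{-x-\frac12+n}{n}$, the product rule gives $R'(0)\binm{-\frac12+n}{n}+R(0)\,\mathcal{D}_x\binm{-x-\frac12+n}{n}$, where the second piece throws off $\sum_{i=1}^n\frac{1}{i-1/2}=2H_{2n}-H_n$ by \eqref{relation-b}, which I then reconcile with the $H_{1+2n}$ coming from the leftover sum via $H_{2n}=H_{1+2n}-\frac{1}{2n+1}$. The delicate bookkeeping is keeping the common denominator $(4n^2-1)^2(2n-3)^2$ under control so that the rational remnants collect into $\frac{2n(8n^4+20n^3-2n^2-53n+8)}{(4n^2-1)^2(2n-3)^2}$ while the harmonic coefficient matches $\frac{2n\{H_n-2H_{1+2n}\}}{(4n^2-1)(2n-3)}$ exactly; everything else transcribes the proofs of Theorems~\ref{thm-d} and~\ref{thm-b}.
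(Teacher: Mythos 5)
Your proposal is correct and follows essentially the same route as the paper: derive the auxiliary evaluation of $\sum_k(-1)^kk(1+k)\binom{a+k}{k}\binom{b+n}{n-k}$ from \eqref{chu-vandermonde-a} and \eqref{chu-vandermonde-b} (this is exactly the paper's \eqref{chu-vandermonde-d}), specialize $a=\tfrac{x}{2}$, $b=\tfrac{1-x}{2}$, apply $\mathcal{D}_x$, and convert via \eqref{relation-c}, \eqref{relation-d}, then evaluate the residual $H$-free sum by the same auxiliary identity. Your computed value $\sum_{k=0}^n(-4)^kk\binom{n}{k}/\binom{1+2k}{k}=\frac{4n}{(4n^2-1)(2n-3)}$ is right and does reconcile the harmonic coefficient with the stated $\frac{2n\{H_n-2H_{1+2n}\}}{(4n^2-1)(2n-3)}$.
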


\begin{proof}
Utilize \eqref{chu-vandermonde-a} and \eqref{chu-vandermonde-b} to
obtain
 \bmn\label{chu-vandermonde-d}
&&\xxqdn\sum_{k=0}^n(-1)^kk(1+k)\binm{a+k}{k}\binm{b+n}{n-k}
 \nnm\\
&&\xxqdn\:=\:\frac{(a+1)(a+2-2b+an)n}{(b-a-2+n)(b-a-1+n)}\binm{b-a-1+n}{n}.
 \emn
The case $a=\frac{x}{2}$, $b=\frac{1-x}{2}$ of
 it is
 \bnm
 \quad\:\:\sum_{k=0}^n(-1)^kk(1+k)\binm{\frac{x}{2}+k}{k}\binm{\frac{1-x}{2}+n}{n-k}=
 \frac{(2+x)(2+3x+nx)n}{(1+2x-2n)(3+2x-2n)}\binm{-x-\frac{1}{2}+n}{n}.
 \enm
 Applying the derivative operator $\mathcal{D}_x$ to both sides of
the last equation, we get
 \bnm
 &&\xxqdn\sum_{k=0}^{n}(-1)^kk(1+k)\binm{\frac{1}{2}+n}{n-k}
  \bigg\{\frac{1}{2}H_k+\frac{1}{2}\sum_{i=1}^k\frac{1}{i+\frac{1}{2}}-\frac{1}{2}\sum_{i=1}^n\frac{1}{i+\frac{1}{2}}\bigg\}
   \nnm\\
&&\xxqdn\:=\frac{2n(4n^3+8n^2-13n-4)}{(2n-3)^2(2n-1)^2}\binm{-\frac{1}{2}+n}{n}-\frac{4n}{(2n-3)(2n-1)}\binm{-\frac{1}{2}+n}{n}\sum_{i=1}^n\frac{1}{i-\frac{1}{2}}.
 \enm
By means of \eqref{relation-c} and \eqref{relation-d}, the last
equation can be restated as
 \bnm
\sum_{k=0}^{n}(-4)^k\frac{\binm{n}{k}}{\binm{1+2k}{k}}kH_{1+2k}
 &&\xqdn\!=\frac{2n(8n^4+20n^3-2n^2-53n+8)}{(4n^2-1)^2(2n-3)^2}-\frac{4n\{2H_{1+2n}-H_n\}}{(4n^2-1)(2n-3)}\\
&&\xqdn\!+\frac{2H_{1+2n}-H_n}{2}\sum_{k=0}^{n}(-4)^k\frac{\binm{n}{k}}{\binm{1+2k}{k}}k.
 \enm
Calculating the series on the right hand side by
\eqref{chu-vandermonde-d}, we gain Theorem \ref{thm-e}.
\end{proof}

\begin{thm}\label{thm-f}
 \bnm
  &&\xxqdn\sum_{k=0}^{n}(-4)^k\frac{\binm{n}{k}}{\binm{1+2k}{k}}k^2H_{1+2k}
 =\frac{2n(4n-1)\{2H_{1+2n}-H_n\}}{(4n^2-1)(2n-3)(2n-5)}\\
 &&\xxqdn\:\:-\:\frac{2n(32n^6+160n^5-544n^4-560n^3+1482n^2-382n-17)}{(4n^2-1)^2(2n-3)^2(2n-5)^2}.
 \enm
\end{thm}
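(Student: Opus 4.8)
The plan is to follow the template of Theorems \ref{thm-d} and \ref{thm-e} essentially verbatim, the single new ingredient being a convolution identity carrying the polynomial weight $k^2(1+k)$. Concretely, I would first establish
\[\sum_{k=0}^n(-1)^kk^2(1+k)\binm{a+k}{k}\binm{b+n}{n-k}=R(a,b,n)\,\binm{b-a-1+n}{n}\]
for an explicit rational function $R$. Starting from $k\binm{a+k}{k}=(a+1)\binm{a+k}{k-1}$, I would peel off one factor of $k$, shift the index $k\mapsto k+1$, and expand $(k+1)(k+2)=k^2+3k+2$; the three resulting sums are then evaluated by \eqref{chu-vandermonde}, \eqref{chu-vandermonde-a} and \eqref{chu-vandermonde-b} with the parameters $a$, $b$, $n$ suitably shifted. (Equivalently, one peels a factor $k$ off the integrand of \eqref{chu-vandermonde-d} and reduces in the same way.)

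Next I would specialize $a=\frac x2$, $b=\frac{1-x}2$ exactly as in the earlier proofs, and apply $\mathcal{D}_x$ to both sides. On the left each summand acquires the bracket $\{\frac12H_k+\frac12\sum_{i=1}^k\frac1{i+1/2}-\frac12\sum_{i=1}^n\frac1{i+1/2}\}$ multiplying $(-1)^kk^2(1+k)\binm{\frac12+n}{n-k}$, precisely as in \eqref{harmornic-b}; on the right one differentiates $R(\frac x2,\frac{1-x}2,n)\binm{-x-\frac12+n}{n}$, which, as in the proof of Theorem \ref{thm-e}, yields a rational multiple of $\binm{-\frac12+n}{n}$ plus a rational multiple of $\binm{-\frac12+n}{n}\sum_{i=1}^n\frac1{i-1/2}$.

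I would then convert via \eqref{relation-c} and \eqref{relation-d}. By \eqref{relation-d} the bracket collapses to $H_{1+2k}-\frac{2H_{1+2n}-H_n}2$, the constant $-1$ cancelling the $-1$ in $\frac12\sum_{i=1}^n\frac1{i+1/2}=\frac{2H_{1+2n}-H_n}2-1$, while \eqref{relation-c} replaces $(1+k)\binm{\frac12+n}{n-k}$ by $(1+n)4^{k-n}\binm nk\binm{1+2n}{n}/\binm{1+2k}{k}$. This isolates the desired sum together with the residual term $\frac{2H_{1+2n}-H_n}2\sum_{k=0}^n(-4)^k\frac{\binm nk}{\binm{1+2k}k}k^2$, which is evaluated by the companion identity above specialized at $x=0$ (i.e. $a=0$, $b=\frac12$), exactly as \eqref{chu-vandermonde-d} played this role in the proof of Theorem \ref{thm-e}.

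The main obstacle is the concluding algebra rather than any conceptual step: after differentiating the degree-raising prefactor $R$ and substituting the residual series, one must collect everything over the common denominator $(4n^2-1)^2(2n-3)^2(2n-5)^2$ and verify that the non-harmonic part reduces to the sextic numerator $32n^6+160n^5-544n^4-560n^3+1482n^2-382n-17$. Everything upstream of this simplification is a routine instance of the machinery already deployed for Theorems \ref{thm-d} and \ref{thm-e}.
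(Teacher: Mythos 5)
Your proposal follows the paper's proof of Theorem \ref{thm-f} essentially step for step: the same derivation of the weighted convolution identity \eqref{chu-vandermonde-e} by peeling off a factor of $k$, shifting the index, and expanding $(k+1)(k+2)$ into pieces handled by \eqref{chu-vandermonde}, \eqref{chu-vandermonde-a}, \eqref{chu-vandermonde-b}; the same specialization $a=\tfrac{x}{2}$, $b=\tfrac{1-x}{2}$ followed by $\mathcal{D}_x$; the same conversion via \eqref{relation-c} and \eqref{relation-d}; and the same evaluation of the residual series by \eqref{chu-vandermonde-e} at $x=0$. The approach and all key steps are correct and match the paper.
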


\begin{proof}
 It is not difficult to derive that
 \bnm
 &&\xxqdn\sum_{k=0}^n(-1)^kk^2(1+k)\binm{a+k}{k}\binm{b+n}{n-k}\\
 &&\xxqdn\:=\:(a+1)\sum_{k=1}^n(-1)^kk(1+k)\binm{a+k}{k-1}\binm{b+n}{n-k}\\
 &&\xxqdn\:=\:-(a+1)\sum_{k=0}^{n-1}(-1)^k(k+1)(k+2)\binm{a+1+k}{k}\binm{b+n}{n-1-k}\\
&&\xxqdn\:=\:-2(a+1)\sum_{k=0}^{n-1}(-1)^k\binm{a+1+k}{k}\binm{b+n}{n-1-k}\\
&&\xqdn\:-\:3(a+1)\sum_{k=0}^{n-1}(-1)^kk\binm{a+1+k}{k}\binm{b+n}{n-1-k}\\
&&\xqdn\:-\:(a+1)\sum_{k=0}^{n-1}(-1)^kk^2\binm{a+1+k}{k}\binm{b+n}{n-1-k}.\\
 \enm
Evaluate respectively the three series on the right hand side by
\eqref{chu-vandermonde}, \eqref{chu-vandermonde-a} and
\eqref{chu-vandermonde-b}, we achieve
  \bmn\label{chu-vandermonde-e}
&&\xxqdn\qdn\sum_{k=0}^n(-1)^kk^2(1+k)\binm{a+k}{k}\binm{b+n}{n-k}
=\binm{b-a-1+n}{n}\nnm\\
&&\xxqdn\qdn\:\times\:\frac{n(1+a)\{2b(1-b)-(1+a)(4+a-4b)n-a(1+a)n^2\}}{(b-a-3+n)(b-a-2+n)(b-a-1+n)}.
 \emn
The case $a=\frac{x}{2}$, $b=\frac{1-x}{2}$ of
 it can be written as
\bnm
 &&\xxqdn\sum_{k=0}^n(-1)^kk^2(1+k)\binm{\frac{x}{2}+k}{k}\binm{\frac{1-x}{2}+n}{n-k}=\binm{-x-\frac{1}{2}+n}{n}
 \\&&\xxqdn\:\times\:\frac{n(2+x)\{-2+8n+2n(7+n)x+(2+5n+n^2)x^2\}}{(1+2x-2n)(3+2x-2n)(5+2x-2n)}.
 \enm
 Applying the derivative operator $\mathcal{D}_x$ to both sides of
the last equation, we have \bnm
 &&\sum_{k=0}^{n}(-1)^kk^2(1+k)\binm{\frac{1}{2}+n}{n-k}
  \bigg\{\frac{1}{2}H_k+\frac{1}{2}\sum_{i=1}^k\frac{1}{i+\frac{1}{2}}-\frac{1}{2}\sum_{i=1}^n\frac{1}{i+\frac{1}{2}}\bigg\}
   \nnm\\
&&\:=\frac{4n(4n-1)}{(2n-1)(2n-3)(2n-5)}\binm{-\frac{1}{2}+n}{n}\sum_{i=1}^n\frac{1}{i-\frac{1}{2}}\\
&&\:-\:\frac{2n(16n^5+72n^4-372n^3+210n^2+196n-77)}{(2n-1)^2(2n-3)^2(2n-5)^2}\binm{-\frac{1}{2}+n}{n}.
 \enm
In accordance with \eqref{relation-c} and \eqref{relation-d}, the
last equation can be reformulated as
 \bnm
\sum_{k=0}^{n}(-4)^k\frac{\binm{n}{k}}{\binm{1+2k}{k}}k^2H_{1+2k}
 &&\xqdn\!=\frac{4n(4n-1)\{2H_{1+2n}-H_n\}}{(4n^2-1)(2n-3)(2n-5)}\\
  &&\xqdn\!-\frac{2n(32n^6+160n^5-544n^4-560n^3+1482n^2-382n-17)}{(4n^2-1)^2(2n-3)^2(2n-5)^2}\\
&&\xqdn\!+\frac{2H_{1+2n}-H_n}{2}\sum_{k=0}^{n}(-4)^k\frac{\binm{n}{k}}{\binm{1+2k}{k}}k^2.
 \enm
Computing the series on the right hand side by
\eqref{chu-vandermonde-e}, we attain Theorem \ref{thm-f}.
\end{proof}

\section{The third family of summation formulas involving harmonic numbers}
\begin{thm}\label{thm-g}
\bnm
 \quad\sum_{k=0}^{n}\bigg(-\frac{1}{4}\bigg)^k\binm{n}{k}\binm{2k}{k}H_{2k}
 =\frac{1}{2^{1+2n}}\binm{2n}{n}\{3H_n-4H_{2n}\}.
 \enm
\end{thm}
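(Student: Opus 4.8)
The plan is to follow the same template as in Theorems \ref{thm-a}--\ref{thm-f}, but to interchange the roles of the two binomial factors so that $\binm{2k}{k}$ is produced in the \emph{numerator} rather than the denominator. Concretely, I would specialize \eqref{chu-vandermonde} at $a=\frac{x-1}{2}$ and $b=-\frac{x}{2}$, which turns it into
\[
\sum_{k=0}^n(-1)^k\binm{\frac{x-1}{2}+k}{k}\binm{-\frac{x}{2}+n}{n-k}=\binm{-x-\frac{1}{2}+n}{n}.
\]
At $x=0$ the summand reduces to $\left(-\frac14\right)^k\binm{n}{k}\binm{2k}{k}$, since $\binm{-\frac12+k}{k}=4^{-k}\binm{2k}{k}$ and $\binm{n}{n-k}=\binm{n}{k}$; this is precisely the weight we want to build harmonic numbers upon.

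Next I would apply $\mathcal{D}_x$ to both sides. On the left the product rule produces two logarithmic-derivative contributions: differentiating $\binm{\frac{x-1}{2}+k}{k}$ gives the relative factor $\frac12\sum_{i=1}^k\frac{1}{i-\frac12}$, while differentiating $\binm{-\frac{x}{2}+n}{n-k}$ gives $-\frac12(H_n-H_k)$. The crucial point is the choice $b=-\frac{x}{2}$: its derivative contributes a $+\frac12 H_k$ that combines with the half-integer sum in the first factor via \eqref{relation-b}, namely $\frac12 H_k+\frac12\sum_{i=1}^k\frac{1}{i-\frac12}=H_{2k}$, so that the full bracket collapses to $H_{2k}-\frac12 H_n$ with no residual $k$-dependence outside $H_{2k}$. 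This is the step I expect to be the main obstacle: the two specializations must be chosen so that the spurious $H_k$ pieces cancel exactly, leaving a pure $H_{2k}$-sum plus a $k$-independent multiple of the plain sum rather than a tangled combination of $H_{2k}$ and $H_k$.

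It then remains to evaluate the two closed pieces. The differentiated right-hand side is $\mathcal{D}_x\binm{-x-\frac12+n}{n}=-4^{-n}\binm{2n}{n}\sum_{i=1}^n\frac{1}{i-\frac12}=-4^{-n}\binm{2n}{n}(2H_{2n}-H_n)$, again using \eqref{relation-b} with $k=n$. The constant term $-\frac12 H_n$ multiplies $\sum_{k=0}^n\left(-\frac14\right)^k\binm{n}{k}\binm{2k}{k}$, which equals $4^{-n}\binm{2n}{n}$ by the $x=0$ instance of the displayed identity (a case of \eqref{chu-vandermonde}). Solving the resulting linear equation for $\sum_{k=0}^n\left(-\frac14\right)^k\binm{n}{k}\binm{2k}{k}H_{2k}$ yields $4^{-n}\binm{2n}{n}\left(\frac32 H_n-2H_{2n}\right)$, which is exactly $\frac{1}{2^{1+2n}}\binm{2n}{n}\{3H_n-4H_{2n}\}$, as claimed in Theorem \ref{thm-g}.
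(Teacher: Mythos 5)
Your proposal is correct and follows essentially the same route as the paper: the same specialization $a=\frac{x-1}{2}$, $b=-\frac{x}{2}$ of \eqref{chu-vandermonde}, the same application of $\mathcal{D}_x$ with the bracket collapsing to $H_{2k}-\frac12 H_n$ via \eqref{relation-b} and \eqref{relation-e}, and the same final evaluation of the residual plain sum by \eqref{chu-vandermonde}. The arithmetic at the end, giving $4^{-n}\binm{2n}{n}\bigl(\tfrac32 H_n-2H_{2n}\bigr)$, also matches the stated result.
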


\begin{proof}
The case $a=\frac{x-1}{2}$, $b=-\frac{x}{2}$ of
 \eqref{chu-vandermonde} reads as
 \bnm
 \sum_{k=0}^{n}(-1)^k\binm{\frac{x-1}{2}+k}{k}\binm{-\frac{x}{2}+n}{n-k}
={\binm{-x-\tfrac{1}{2}+n}{n}}.
 \enm
Applying the derivative operator $\mathcal{D}_x$ to both sides of
it, we obtain
  \bmn\label{harmonic-c}
 \sum_{k=0}^{n}(-1)^k\binm{n}{k}\binm{-\tfrac{1}{2}+k}{k}
  \bigg\{\frac{1}{2}\sum_{i=1}^k\frac{1}{i-\frac{1}{2}}+\frac{1}{2}H_k-\frac{1}{2}H_n\bigg\}
  =-{\binm{-\tfrac{1}{2}+n}{n}}\sum_{i=1}^n\frac{1}{i-\frac{1}{2}}.
 \emn
According to \eqref{relation-b} and the relation:
 \bmn\label{relation-e}
\binm{-\frac{1}{2}+k}{k}=\frac{1}{4^k}\binm{2k}{k},
 \emn
 \eqref{harmonic-c} can be manipulated as
 \bnm
 \qquad\sum_{k=0}^{n}\bigg(-\frac{1}{4}\bigg)^k\binm{n}{k}\binm{2k}{k}H_{2k}= \frac{H_n}{2}\sum_{k=0}^{n}\bigg(-\frac{1}{4}\bigg)^k\binm{n}{k}\binm{2k}{k}
 -\frac{2H_{2n}-H_n}{4^n}\binm{2n}{n}.
 \enm
 Calculating the series on the right hand side by \eqref{chu-vandermonde}, we get Theorem \ref{thm-g}.
\end{proof}

\begin{thm}\label{thm-h}
\bnm
\qquad\sum_{k=0}^{n}\bigg(-\frac{1}{4}\bigg)^k\binm{n}{k}\binm{2k}{k}kH_{2k}
=\frac{n}{(1-2n)2^{1+2n}}\binm{2n}{n}\bigg\{3H_n-4H_{2n}-\frac{2+4n}{1-2n}\bigg\}.
 \enm
\end{thm}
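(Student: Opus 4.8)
The plan is to follow the template of Theorem \ref{thm-b}, which produced the weight-$k$ companion of Theorem \ref{thm-a}, but now built on the substitution that governed Theorem \ref{thm-g}. I would start from the weighted convolution \eqref{chu-vandermonde-a}, which already absorbs the factor $k$, and specialize it by $a=\frac{x-1}{2}$, $b=-\frac{x}{2}$. Evaluating the three linear combinations $a+1$, $a+1-b-n$, $b-a-1+n$ at this choice turns \eqref{chu-vandermonde-a} into
\[
\sum_{k=0}^n(-1)^kk\binm{\frac{x-1}{2}+k}{k}\binm{-\frac{x}{2}+n}{n-k}=\frac{(x+1)n}{2x+1-2n}\binm{-x-\frac{1}{2}+n}{n}.
\]

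Next I would apply $\mathcal{D}_x$ to both sides. On the left both binomials depend on $x$, so the product rule produces a bracket with two pieces: the factor $\binm{\frac{x-1}{2}+k}{k}$ contributes $\frac{1}{2}\sum_{i=1}^k\frac{1}{i-\frac{1}{2}}$, while $\binm{-\frac{x}{2}+n}{n-k}$ contributes $\frac{1}{2}H_k-\frac{1}{2}H_n$, giving exactly the bracket of \eqref{harmonic-c} now carrying the extra weight $k$:
\[
\sum_{k=0}^{n}(-1)^kk\binm{n}{k}\binm{-\frac{1}{2}+k}{k}\bigg\{\frac{1}{2}\sum_{i=1}^k\frac{1}{i-\frac{1}{2}}+\frac{1}{2}H_k-\frac{1}{2}H_n\bigg\}.
\]
On the right there are two $x$-dependent factors, so here the product rule is essential: I differentiate the rational prefactor $\frac{(x+1)n}{2x+1-2n}$ (whose value and derivative at $x=0$ are $\frac{n}{1-2n}$ and $-\frac{n(1+2n)}{(1-2n)^2}$) and the binomial $\binm{-x-\frac{1}{2}+n}{n}$ (contributing the familiar $-\binm{-\frac{1}{2}+n}{n}\sum_{i=1}^n\frac{1}{i-\frac{1}{2}}$ already met in Theorem \ref{thm-g}), obtaining the right-hand side $-\frac{n(1+2n)}{(1-2n)^2}\binm{-\frac{1}{2}+n}{n}-\frac{n}{1-2n}\binm{-\frac{1}{2}+n}{n}\sum_{i=1}^n\frac{1}{i-\frac{1}{2}}$.

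I would then pass to the target shape by \eqref{relation-b} and \eqref{relation-e}: the left bracket collapses to $H_{2k}-\frac{1}{2}H_n$ and $\binm{-\frac{1}{2}+k}{k}=\frac{1}{4^k}\binm{2k}{k}$, while on the right $\binm{-\frac{1}{2}+n}{n}=\frac{1}{4^n}\binm{2n}{n}$ and $\sum_{i=1}^n\frac{1}{i-\frac{1}{2}}=2H_{2n}-H_n$. This isolates the desired sum together with the residual series $\sum_{k=0}^{n}(-\frac{1}{4})^kk\binm{n}{k}\binm{2k}{k}$, which I would compute by setting $x=0$ (equivalently $a=-\frac{1}{2}$, $b=0$) in \eqref{chu-vandermonde-a} to get $\frac{n}{(1-2n)4^n}\binm{2n}{n}$.

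The remaining work is pure algebra, and that is where the only genuine obstacle lies. After substitution the right side is a sum of three terms each carrying $\frac{1}{4^n}\binm{2n}{n}$; collapsing them into the stated closed form requires factoring out $\frac{n}{(1-2n)4^n}\binm{2n}{n}$, combining $\frac{1}{2}H_n+H_n=\frac{3}{2}H_n$ against the $-2H_{2n}$ coming from $2H_{2n}-H_n$, and then pulling a factor $\frac{1}{2}$ through the brace so that $4^n$ becomes $2^{1+2n}$ and the constant term reads $-\frac{2+4n}{1-2n}$. Keeping the signs of $1-2n$ consistent and differentiating the rational prefactor correctly are the only places an error is likely; once handled, the expression matches Theorem \ref{thm-h}.
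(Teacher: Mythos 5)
Your proposal is correct and follows essentially the same route as the paper: specialize \eqref{chu-vandermonde-a} at $a=\frac{x-1}{2}$, $b=-\frac{x}{2}$, apply $\mathcal{D}_x$ (yielding exactly the bracket and right-hand side the paper records, since $\frac{n}{2n-1}=-\frac{n}{1-2n}$), convert via \eqref{relation-b} and \eqref{relation-e}, and evaluate the residual series $\sum_{k}(-\tfrac14)^k\binom{n}{k}\binom{2k}{k}k=\frac{n}{(1-2n)4^n}\binom{2n}{n}$ again by \eqref{chu-vandermonde-a}. The closing algebra you describe does collapse to the stated closed form, so no gap remains.
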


\begin{proof}
The case $a=\frac{x-1}{2}$, $b=-\frac{x}{2}$ of
 \eqref{chu-vandermonde-a} is
 \bnm
\qquad
\sum_{k=0}^{n}(-1)^kk\binm{\frac{x-1}{2}+k}{k}\binm{-\frac{x}{2}+n}{n-k}
=\frac{(x+1)n}{2x+1-2n}\binm{-x-\tfrac{1}{2}+n}{n}.
 \enm
Applying the derivative operator $\mathcal{D}_x$ to both sides of
it, we have
  \bnm
 &&\xxqdn\xqdn\qqdn\sum_{k=0}^{n}(-1)^kk\binm{n}{k}\binm{-\tfrac{1}{2}+k}{k}
  \bigg\{\frac{1}{2}\sum_{i=1}^k\frac{1}{i-\frac{1}{2}}+\frac{1}{2}H_k-\frac{1}{2}H_n\bigg\}\\
  &&\xxqdn\xqdn\qqdn\:=\:\frac{n}{2n-1}\binm{-\tfrac{1}{2}+n}{n}\sum_{i=1}^n\frac{1}{i-\frac{1}{2}}
  -\frac{n(2n+1)}{(2n-1)^2}\binm{-\tfrac{1}{2}+n}{n}.
 \enm
In terms of \eqref{relation-b} and \eqref{relation-e},
 the last equation can be restated as
 \bnm
\xqdn\qdn\sum_{k=0}^{n}\bigg(-\frac{1}{4}\bigg)^k\binm{n}{k}\binm{2k}{k}kH_{2k}&&\xqdn\!=
\frac{n}{4^n(2n-1)}\binm{2n}{n}\bigg\{2H_{2n}-H_n-\frac{2n+1}{2n-1}\bigg\}\\
&&\xqdn\!+\:\frac{H_n}{2}\sum_{k=0}^{n}\bigg(-\frac{1}{4}\bigg)^k\binm{n}{k}\binm{2k}{k}k.
 \enm
 Evaluating the series on the right hand side by \eqref{chu-vandermonde-a}, we gain Theorem \ref{thm-h}.
\end{proof}

\begin{thm}\label{thm-i}
\bnm
 &&\xxqdn\sum_{k=0}^{n}\bigg(-\frac{1}{4}\bigg)^k\binm{n}{k}\binm{2k}{k}k^2H_{2k}
 =\frac{n^2}{4^n(2n-1)(2n-3)}\\
 &&\xxqdn\:\:\times\:\:\binm{2n}{n}\bigg\{\frac{3}{2}H_n-2H_{2n}+\frac{8n^3-4n^2-10n+3}{n(2n-1)(2n-3)}\bigg\}.
 \enm
\end{thm}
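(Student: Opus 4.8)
The plan is to follow exactly the template already used in Theorems \ref{thm-g} and \ref{thm-h}. First I would specialize the $k^2$-weighted convolution \eqref{chu-vandermonde-b} to the case $a=\frac{x-1}{2}$, $b=-\frac{x}{2}$. Writing out the factors $a+1=\frac{x+1}{2}$, $an+n-b=\frac{(x+1)n+x}{2}$, $b-a-1+n=n-x-\frac{1}{2}$ and $b-a-2+n=n-x-\frac{3}{2}$, this produces an identity whose left side is $\sum_{k=0}^n(-1)^kk^2\binm{\frac{x-1}{2}+k}{k}\binm{-\frac{x}{2}+n}{n-k}$ and whose right side is a rational multiple of $\binm{-x-\frac{1}{2}+n}{n}$.

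Next I would apply $\mathcal{D}_x$ to both sides. On the left, differentiating the product of binomials yields, just as in the earlier proofs, the bracket $\frac{1}{2}\sum_{i=1}^k\frac{1}{i-\frac{1}{2}}+\frac{1}{2}H_k-\frac{1}{2}H_n$; then \eqref{relation-b} collapses it to $H_{2k}-\frac{1}{2}H_n$, and \eqref{relation-e} turns $(-1)^k\binm{n}{k}\binm{-\frac{1}{2}+k}{k}$ into $\left(-\frac{1}{4}\right)^k\binm{n}{k}\binm{2k}{k}$. Hence the left side splits as $\sum_{k=0}^n\left(-\frac{1}{4}\right)^k\binm{n}{k}\binm{2k}{k}k^2H_{2k}$ minus $\frac{H_n}{2}$ times the harmonic-free companion sum $\sum_{k=0}^n\left(-\frac{1}{4}\right)^k\binm{n}{k}\binm{2k}{k}k^2$.

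The real work is differentiating the right side, which has the shape $\frac{P(x)}{Q(x)}\binm{-x-\frac{1}{2}+n}{n}$ with $P$ quadratic and $Q$ a product of two linear factors. I would use the logarithmic-derivative rule: at $x=0$ the contribution equals $\frac{n^2}{(2n-1)(2n-3)}\binm{-\frac{1}{2}+n}{n}$ times $\big[\frac{P'(0)}{P(0)}-\frac{Q'(0)}{Q(0)}+\frac{B'(0)}{B(0)}\big]$, where $\frac{B'(0)}{B(0)}=-\sum_{i=1}^n\frac{1}{i-\frac{1}{2}}$ as in the proof of Theorem \ref{thm-g}. The rational piece $\frac{P'(0)}{P(0)}-\frac{Q'(0)}{Q(0)}=\frac{2n+1}{n}+\frac{8(n-1)}{(2n-1)(2n-3)}$ is then placed over the common denominator $n(2n-1)(2n-3)$, and its numerator collapses to $8n^3-4n^2-10n+3$. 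Finally, the substitutions $\sum_{i=1}^n\frac{1}{i-\frac{1}{2}}=2H_{2n}-H_n$ and $\binm{-\frac{1}{2}+n}{n}=\frac{1}{4^n}\binm{2n}{n}$ cast the entire right side into the required closed form.

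To finish, I would evaluate the companion sum through the $x=0$ instance of \eqref{chu-vandermonde-b}, obtaining $\sum_{k=0}^n\left(-\frac{1}{4}\right)^k\binm{n}{k}\binm{2k}{k}k^2=\frac{n^2}{4^n(2n-1)(2n-3)}\binm{2n}{n}$, and reinsert it. Transposing the $\frac{H_n}{2}$-term across the equation merges the $\frac{1}{2}H_n$ arising from this companion sum with the $H_n$ coming from $-(2H_{2n}-H_n)$ into $\frac{3}{2}H_n$, which delivers Theorem \ref{thm-i}. I expect the only genuine obstacle to be the bookkeeping in simplifying the right-side rational function to the cubic $8n^3-4n^2-10n+3$; everything else is a direct transcription of the pattern established in the preceding two theorems.
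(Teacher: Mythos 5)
Your proposal is correct and follows essentially the same route as the paper's own proof: specialize \eqref{chu-vandermonde-b} at $a=\frac{x-1}{2}$, $b=-\frac{x}{2}$, apply $\mathcal{D}_x$, reduce via \eqref{relation-b} and \eqref{relation-e}, and evaluate the harmonic-free companion sum by \eqref{chu-vandermonde-b} again. Your intermediate computations (the logarithmic-derivative bracket, the numerator $8n^3-4n^2-10n+3$, and the merging into $\frac{3}{2}H_n$) all check out and match the paper's displayed intermediate identity.
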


\begin{proof}
The case $a=\frac{x-1}{2}$, $b=-\frac{x}{2}$ of
 \eqref{chu-vandermonde-b} can be written as
 \bnm
\qquad\sum_{k=0}^{n}(-1)^kk^2\binm{\frac{x-1}{2}+k}{k}\binm{-\frac{x}{2}+n}{n-k}
=\frac{n(x+1)(n+x+nx)}{(2x+3-2n)(2x+1-2n)}\binm{-x-\tfrac{1}{2}+n}{n}.
 \enm
Applying the derivative operator $\mathcal{D}_x$ to both sides of
it, we achieve
  \bnm
 &&\xqdn\sum_{k=0}^{n}(-1)^kk^2\binm{n}{k}\binm{-\tfrac{1}{2}+k}{k}
  \bigg\{\frac{1}{2}\sum_{i=1}^k\frac{1}{i-\frac{1}{2}}+\frac{1}{2}H_k-\frac{1}{2}H_n\bigg\}\\
  &&\xqdn\:=\:\frac{n(8n^3-4n^2-10n+3)}{(2n-3)^2(2n-1)^2}\binm{-\tfrac{1}{2}+n}{n}
  -\frac{n^2}{(2n-3)(2n-1)}\binm{-\tfrac{1}{2}+n}{n}\sum_{i=1}^n\frac{1}{i-\frac{1}{2}}.
 \enm
By means of \eqref{relation-b} and \eqref{relation-e},
 the last equation can be reformulated as
 \bnm
&&\xxqdn\xxqdn\qqdn\sum_{k=0}^{n}\bigg(-\frac{1}{4}\bigg)^k\binm{n}{k}\binm{2k}{k}k^2H_{2k}=
\frac{H_n}{2}\sum_{k=0}^{n}\bigg(-\frac{1}{4}\bigg)^k\binm{n}{k}\binm{2k}{k}k^2\\
&&\xxqdn\xxqdn\qqdn\:-\:\frac{n^2}{4^n(2n-1)(2n-3)}\binm{2n}{n}\bigg\{2H_{2n}-H_n-\frac{8n^3-4n^2-10n+3}{n(2n-1)(2n-3)}\bigg\}.
 \enm
 Computing the series on the right hand side by \eqref{chu-vandermonde-b}, we attain  Theorem \ref{thm-i}.
\end{proof}

\section{The fourth family of summation formulas involving harmonic numbers}
\begin{thm}\label{thm-j}
\bnm
 &&\xqdn\sum_{k=0}^{n}\bigg(-\frac{1}{4}\bigg)^k\binm{n}{k}\binm{1+2k}{k}H_{1+2k}\\
 &&\xqdn\:=\:\frac{1}{2^{1+2n}(1+2n)}\binm{1+2n}{n}\bigg\{\frac{8+8n}{1+2n}+3H_n-4H_{1+2n}\bigg\}-\frac{1}{1+n}.
 \enm
\end{thm}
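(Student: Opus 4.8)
The plan is to follow the template of Theorems \ref{thm-a}--\ref{thm-i}: specialize \eqref{chu-vandermonde}, apply $\mathcal{D}_x$, convert the resulting half-integer quantities into $H_{1+2k}$ and $\binm{1+2k}{k}$, and read off the closed form once the leftover $H$-free series is evaluated. The new feature here is that $\binm{1+2k}{k}$ is \emph{not} a constant multiple of $4^k\binm{c+k}{k}$ for any fixed $c$ (contrast \eqref{relation-e}); the honest relation is $\binm{\frac12+k}{k}=\frac{1+k}{4^k}\binm{1+2k}{k}$, which carries an unwanted factor $1+k$. To absorb it I would place the odd binomial in the \emph{upper} slot at level $n+1$, taking $a=\frac{x+1}{2}$ and $b=1-\frac{x}{2}$ in \eqref{chu-vandermonde}, which reads
\[
\sum_{k=0}^n(-1)^k\binm{\frac{x+1}{2}+k}{k}\binm{1-\frac{x}{2}+n}{n-k}=\binm{-x-\frac12+n}{n}.
\]

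Applying $\mathcal{D}_x$, the product rule on the left produces the factor $\binm{\frac12+k}{k}\binm{1+n}{1+k}$ (recall $\binm{1+n}{n-k}=\binm{1+n}{1+k}$) times the bracket $\frac12\sum_{i=1}^k\frac{1}{i+\frac12}+\frac12H_{k+1}-\frac12H_{n+1}$, while the right side becomes $-\binm{-\frac12+n}{n}\sum_{i=1}^n\frac{1}{i-\frac12}$. Writing $\frac12H_{k+1}=\frac12H_k+\frac{1}{2(k+1)}$ and invoking \eqref{relation-d}, the bracket collapses to $H_{1+2k}-1+\frac{1}{2(k+1)}-\frac12H_{n+1}$. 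The crucial simplification is now $(1+k)\binm{1+n}{1+k}=(1+n)\binm{n}{k}$, which together with $\binm{\frac12+k}{k}=\frac{1+k}{4^k}\binm{1+2k}{k}$ turns the prefactor into $(1+n)\,4^{-k}\binm{n}{k}\binm{1+2k}{k}$; on the right I would use \eqref{relation-e} and $\sum_{i=1}^n\frac{1}{i-\frac12}=2H_{2n}-H_n$. Dividing by $1+n$ yields
\[
\sum_{k=0}^n\Big(-\tfrac14\Big)^k\binm{n}{k}\binm{1+2k}{k}\Big\{H_{1+2k}-1+\tfrac{1}{2(k+1)}-\tfrac12H_{n+1}\Big\}=-\frac{2H_{2n}-H_n}{(1+n)4^n}\binm{2n}{n}.
\]

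It then remains to isolate $S_H:=\sum(-\tfrac14)^k\binm{n}{k}\binm{1+2k}{k}H_{1+2k}$. The constant terms $-1-\frac12H_{n+1}$ multiply the base series $S_0:=\sum(-\tfrac14)^k\binm{n}{k}\binm{1+2k}{k}$, which I would evaluate simply by putting $x=0$ in the specialized identity above, giving $S_0=\frac{1}{(1+n)4^n}\binm{2n}{n}$. The term $\frac{1}{2(k+1)}$ produces an auxiliary series; using $\frac{1}{k+1}\binm{n}{k}=\frac{1}{1+n}\binm{1+n}{k+1}$, the shift $j=k+1$, and $\binm{2j-1}{j-1}=\frac12\binm{2j}{j}$, this reduces to the third-family base sum $\sum_{j}(-\tfrac14)^j\binm{1+n}{j}\binm{2j}{j}=\frac{1}{4^{n+1}}\binm{2n+2}{n+1}$ at level $n+1$ (a plain \eqref{chu-vandermonde} evaluation, as in Theorem \ref{thm-g}). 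Substituting both evaluations and simplifying---using $\binm{2n+2}{n+1}=\frac{2(2n+1)}{n+1}\binm{2n}{n}$, $\frac{1}{1+n}\binm{2n}{n}=\frac{1}{1+2n}\binm{1+2n}{n}$, and $H_{2n}=H_{1+2n}-\frac{1}{1+2n}$---makes the bracketed constants combine to $+2$ and delivers Theorem \ref{thm-j}, with the free term $-\frac{1}{1+n}$ emerging from the auxiliary series.

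I expect the main obstacle to be exactly this bookkeeping around the factor $1+k$: choosing $b=1-\frac{x}{2}$ rather than the ``third-family'' choice $b=-\frac{x}{2}$ is what makes the series self-contained (with the latter choice, differentiation only relates the $t=0$ and $t=1$ sums), but it is bought at the cost of the residual $\frac{1}{2(k+1)}$ and the shift $H_n\to H_{n+1}$, whose careful evaluation is precisely what generates the extra polynomial term $\frac{8+8n}{1+2n}$ and the free term $-\frac{1}{1+n}$ in the final formula.
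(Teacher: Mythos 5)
Your argument is correct --- the key identities all check out, and the resulting closed form agrees with Theorem \ref{thm-j} (I also verified the case $n=1$) --- but it takes a genuinely different route from the paper's, differing in where the unwanted factor $1+k$ coming from $\binom{\frac12+k}{k}=\frac{1+k}{4^k}\binom{1+2k}{k}$ gets absorbed. The paper first proves a separate lemma, \eqref{chu-vandermonde-f}, a closed form for the $\frac{1}{1+k}$-weighted alternating Chu--Vandermonde sum (obtained by an index shift to level $n+1$, which is what creates the boundary term $\frac{1}{a}\binom{b+n}{1+n}$ and, after specialization, the free term $-\frac{1}{1+n}$); it then sets $a=\frac{x+1}{2}$, $b=-\frac{x}{2}$ in that lemma, differentiates, uses \eqref{relation-d} and \eqref{relation-f}, and evaluates the single leftover $H$-free series again by \eqref{chu-vandermonde-f}. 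You instead keep the plain convolution \eqref{chu-vandermonde} but shift the lower parameter to $b=1-\frac{x}{2}$, so that $(1+k)\binom{1+n}{1+k}=(1+n)\binom{n}{k}$ cancels the $1+k$ directly. What this buys is a one-term right-hand side and no need for the auxiliary lemma; what it costs is the residual $\frac{1}{2(k+1)}$ in the harmonic bracket and the shift $H_n\to H_{n+1}$, which force a second base-sum evaluation (your series $T$, reduced via $j=k+1$ and $\binom{2j-1}{j-1}=\frac12\binom{2j}{j}$ to the level-$(n+1)$ instance of the third-family sum), and it is this $T$ that now supplies the $-\frac{1}{1+n}$. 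The two bookkeeping burdens are comparable: your evaluations $S_0=\frac{1}{(1+n)4^n}\binom{2n}{n}$ and $T=\frac{2}{1+n}-\frac{1+2n}{(1+n)^2 4^n}\binom{2n}{n}$ are right, the constants inside the bracket do collapse to $+2$, and the conversion $\frac{1}{1+n}\binom{2n}{n}=\frac{1}{1+2n}\binom{1+2n}{n}$ recovers the stated form of Theorem \ref{thm-j}.
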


\begin{proof}
 It is ordinary to find that
 \bnm
 &&\qdn\xqdn\xxqdn\sum_{k=0}^n\frac{(-1)^k}{1+k}\binm{a+k}{k}\binm{b+n}{n-k}\\
 &&\qdn\xqdn\xxqdn\:=\:\sum_{k=1}^{n+1}\frac{(-1)^{k-1}}{k}\binm{a-1+k}{k-1}\binm{b+n}{1+n-k}\\
 &&\qdn\xqdn\xxqdn\:=\:-\frac{1}{a}\bigg\{\sum_{k=0}^{n+1}(-1)^k\binm{a-1+k}{k}\binm{b+n}{1+n-k}-\binm{b+n}{1+n}\bigg\}.
 \enm
Calculate the series on the right hand side by
\eqref{chu-vandermonde} to obtain
 \bmn\label{chu-vandermonde-f}
\sum_{k=0}^n\frac{(-1)^k}{1+k}\binm{a+k}{k}\binm{b+n}{n-k}=\frac{1}{a}\binm{b+n}{1+n}-\frac{1}{a}\binm{b-a+n}{1+n}.
 \emn
The case $a=\frac{x+1}{2}$, $b=-\frac{x}{2}$ of it reads as
 \bnm
 &&\xqdn\qqdn\sum_{k=0}^{n}\frac{(-1)^k}{1+k}\binm{\frac{x+1}{2}+k}{k}\binm{-\frac{x}{2}+n}{n-k}\\
&&\xqdn\qqdn\:=\:\frac{1+2x}{(1+n)(1+x)}{\binm{-x-\tfrac{1}{2}+n}{n}}-\frac{x}{(1+n)(1+x)}{\binm{-\frac{x}{2}+n}{n}}.
 \enm
Applying the derivative operator $\mathcal{D}_x$ to both sides of
the last equation, we get
  \bmn\label{harmonic-d}
 &&\xxqdn\xxqdn\sum_{k=0}^{n}\frac{(-1)^k}{1+k}\binm{n}{k}\binm{\tfrac{1}{2}+k}{k}
  \bigg\{\frac{1}{2}\sum_{i=1}^k\frac{1}{i+\frac{1}{2}}+\frac{1}{2}H_k-\frac{1}{2}H_n\bigg\}
   \nnm\\
  &&\xxqdn\xxqdn\:\:=\:\frac{1}{1+n}{\binm{-\tfrac{1}{2}+n}{n}}\bigg\{1-\sum_{i=1}^n\frac{1}{i-\frac{1}{2}}\bigg\}-\frac{1}{1+n}.
 \emn
In accordance with \eqref{relation-d} and the relation:
 \bmn\label{relation-f}
\frac{1}{1+k}\binm{\frac{1}{2}+k}{k}=\frac{1}{4^k}\binm{1+2k}{k},
 \emn
 \eqref{harmonic-d} can be manipulated as
 \bnm
 \sum_{k=0}^{n}\bigg(-\frac{1}{4}\bigg)^k\binm{n}{k}\binm{1+2k}{k}H_{1+2k}&&\xqdn\!
 =\frac{1}{4^n(1+2n)}\binm{1+2n}{n}\bigg\{\frac{3+2n}{1+2n}+H_n-2H_{1+2n}\bigg\}\\
&&\xqdn\! -\frac{1}{1+n}+\frac{2+H_n}{2}
\sum_{k=0}^{n}\bigg(-\frac{1}{4}\bigg)^k\binm{n}{k}\binm{1+2k}{k}.
 \enm
 Evaluating the series on the right hand side by \eqref{chu-vandermonde-f}, we gain Theorem \ref{thm-j}.
\end{proof}

\begin{thm}\label{thm-k}
\bnm
 &&\xqdn\sum_{k=0}^{n}\bigg(-\frac{1}{4}\bigg)^k\binm{n}{k}\binm{1+2k}{k}kH_{1+2k}=\frac{3n}{2^{1+2n}(1-4n^2)}\binm{1+2n}{n}\\
 &&\xqdn\:\:\times\:\bigg\{3H_n-
 4H_{1+2n}-\frac{16n}{1-4n^2}-\frac{2-14n}{3n}\bigg\}+\frac{1}{1+n}.
 \enm
\end{thm}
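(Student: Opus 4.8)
The plan is to follow the proof of Theorem~\ref{thm-j} verbatim, inserting one extra factor of $k$ at every stage.

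First I would produce the weighted companion of \eqref{chu-vandermonde-f}. Using $\frac{k}{1+k}=1-\frac{1}{1+k}$, the sum $\sum_{k=0}^n\frac{(-1)^kk}{1+k}\binm{a+k}{k}\binm{b+n}{n-k}$ is just the difference of the plain Chu-Vandermonde sum \eqref{chu-vandermonde} and the sum \eqref{chu-vandermonde-f}, so that
\[
\sum_{k=0}^n\frac{(-1)^kk}{1+k}\binm{a+k}{k}\binm{b+n}{n-k}=\binm{b-a-1+n}{n}-\frac{1}{a}\binm{b+n}{1+n}+\frac{1}{a}\binm{b-a+n}{1+n}.
\]
I shall refer to this as the auxiliary identity.

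Next I would specialize it to $a=\frac{x+1}{2}$, $b=-\frac{x}{2}$, exactly as in the proof of Theorem~\ref{thm-j}, and apply the derivative operator $\mathcal{D}_x$ to both sides. On the left this produces $\sum_{k=0}^{n}\frac{(-1)^kk}{1+k}\binm{n}{k}\binm{\frac{1}{2}+k}{k}\big\{\frac{1}{2}\sum_{i=1}^k\frac{1}{i+\frac{1}{2}}+\frac{1}{2}H_k-\frac{1}{2}H_n\big\}$; the relations \eqref{relation-f} and \eqref{relation-d} turn the binomial factor into $\frac{1}{4^k}\binm{1+2k}{k}$ and the brace into $H_{1+2k}-1-\frac{1}{2}H_n$, so the left-hand side reduces to
\[
\sum_{k=0}^{n}\bigg(-\frac{1}{4}\bigg)^k\binm{n}{k}\binm{1+2k}{k}kH_{1+2k}-\frac{2+H_n}{2}\sum_{k=0}^{n}\bigg(-\frac{1}{4}\bigg)^k\binm{n}{k}\binm{1+2k}{k}k.
\]

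Finally I would move the harmonic-free sum to the right and evaluate it through the auxiliary identity specialized at $x=0$, thereby solving for the target sum of Theorem~\ref{thm-k}. The main obstacle is the right-hand side: applying $\mathcal{D}_x$ to the three-term expression $\binm{-x-\frac{3}{2}+n}{n}-\frac{2}{x+1}\binm{-\frac{x}{2}+n}{1+n}+\frac{2}{x+1}\binm{-x-\frac{1}{2}+n}{1+n}$ demands the product rule on the $\frac{2}{x+1}$ prefactors alongside the logarithmic derivatives of the three binomials, generating finite sums of the types $\sum_{i=1}^n\frac{1}{i-\frac{1}{2}}$ and $\sum_{i=1}^n\frac{1}{i-\frac{3}{2}}$ together with the constant binomial values at $x=0$. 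Reassembling these and rewriting them in terms of $\binm{1+2n}{n}$, $H_n$ and $H_{1+2n}$, via the same two relations used on the left, is the only genuinely laborious computation; the remaining manipulations are pure bookkeeping, identical in structure to Theorems~\ref{thm-d} and \ref{thm-j}.
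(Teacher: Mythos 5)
Your proposal follows the paper's proof of Theorem~\ref{thm-k} essentially verbatim: the same decomposition $\frac{k}{1+k}=1-\frac{1}{1+k}$ yielding the weighted Chu--Vandermonde evaluation (your ``auxiliary identity'' is just the uncombined three-term form of \eqref{chu-vandermonde-g}), the same specialization $a=\frac{x+1}{2}$, $b=-\frac{x}{2}$, the same application of $\mathcal{D}_x$ together with \eqref{relation-d} and \eqref{relation-f}, and the same final evaluation of the residual harmonic-free sum. The approach is correct and matches the paper's.
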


\begin{proof}
 It is routine to verify that
 \bnm
 &&\qqdn\xxqdn\sum_{k=0}^n(-1)^k\frac{k}{1+k}\binm{a+k}{k}\binm{b+n}{n-k}\\
 &&\qqdn\xxqdn\:=\:\sum_{k=0}^n(-1)^k\frac{(1+k)-1}{1+k}\binm{a+k}{k}\binm{b+n}{n-k}\\
 &&\qqdn\xxqdn\:=\:\sum_{k=0}^n(-1)^k\binm{a+k}{k}\binm{b+n}{n-k}\\
&&\qqdn\xxqdn\:-\:\,\sum_{k=0}^n(-1)^k\frac{1}{1+k}\binm{a+k}{k}\binm{b+n}{n-k}.
 \enm
Compute respectively the two series on the right hand side by
\eqref{chu-vandermonde} and \eqref{chu-vandermonde-f} to achieve
 \bmn\label{chu-vandermonde-g}
&&\xqdn\sum_{k=0}^n(-1)^k\frac{k}{1+k}\binm{a+k}{k}\binm{b+n}{n-k}
 \nnm\\
&&\xqdn\:=\:\frac{(1+a)n+b}{a(1+n)}\binm{b-a-1+n}{n}-\frac{1}{a}\binm{b+n}{1+n}.
 \emn
The case $a=\frac{x+1}{2}$, $b=-\frac{x}{2}$ of it is
 \bnm
 &&\xqdn\qqdn\sum_{k=0}^{n}(-1)^k\frac{k}{1+k}\binm{\frac{x+1}{2}+k}{k}\binm{-\frac{x}{2}+n}{n-k}\\
&&\xqdn\qqdn\:=\:\frac{(3+x)n-x}{(1+n)(1+x)}{\binm{-x-\tfrac{3}{2}+n}{n}}+\frac{x}{(1+n)(1+x)}{\binm{-\frac{x}{2}+n}{n}}.
 \enm
Applying the derivative operator $\mathcal{D}_x$ to both sides of
the last equation, we have
  \bnm
 &&\xxqdn\sum_{k=0}^{n}(-1)^k\frac{k}{1+k}\binm{n}{k}\binm{\tfrac{1}{2}+k}{k}
  \bigg\{\frac{1}{2}\sum_{i=1}^k\frac{1}{i+\frac{1}{2}}+\frac{1}{2}H_k-\frac{1}{2}H_n\bigg\}
   \nnm\\
  &&\xxqdn\:\:=\:\frac{1}{1+n}-\frac{3n}{1+n}{\binm{-\tfrac{3}{2}+n}{n}}\bigg\{\sum_{i=1}^n\frac{1}{i-\frac{3}{2}}+\frac{1+2n}{3n}\bigg\}.
 \enm
According to \eqref{relation-d} and \eqref{relation-f}, the last
equation can be restated as
 \bnm
&&\xqdn\sum_{k=0}^{n}\bigg(-\frac{1}{4}\bigg)^k\binm{n}{k}\binm{1+2k}{k}kH_{1+2k}\\
 &&\xqdn\:=\:\frac{1}{1+n}-\frac{3n}{4^n(1-4n^2)}\binm{1+2n}{n}\bigg\{2H_{1+2n}-H_n+\frac{1-4n+20n^2+16n^3}{3n(1-4n^2)}\bigg\}\\
&&\xqdn\:+\:\,\frac{2+H_n}{2}
\sum_{k=0}^{n}\bigg(-\frac{1}{4}\bigg)^k\binm{n}{k}\binm{1+2k}{k}k.
 \enm
 Calculating the series on the right hand side by \eqref{chu-vandermonde-g}, we attain Theorem \ref{thm-k}.
\end{proof}

\begin{thm}\label{thm-l}
\bnm
 &&\xqdn\qdn\sum_{k=0}^{n}\bigg(-\frac{1}{4}\bigg)^k\binm{n}{k}\binm{1+2k}{k}k^2H_{1+2k}=\frac{3n(2-3n)}{(3-2n)(1-4n^2)4^n}\binm{1+2n}{n}\\
 &&\xqdn\qdn\:\times\:\bigg\{2H_{1+2n}-\frac{3}{2}H_n-\frac{3-8n-12n^2}{1-4n^2}+\frac{9+n(3-25n+6n^2)}{3n(3-2n)(2-3n)}\bigg\}
 -\frac{1}{1+n}.
 \enm
\end{thm}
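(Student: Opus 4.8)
The plan is to follow closely the template of Theorems~\ref{thm-j} and~\ref{thm-k}; the single new ingredient is a Chu--Vandermonde evaluation carrying the weight $\frac{k^2}{1+k}$. I would obtain it from the splitting $\frac{k^2}{1+k}=k-\frac{k}{1+k}$, so that
\[
\sum_{k=0}^n(-1)^k\frac{k^2}{1+k}\binm{a+k}{k}\binm{b+n}{n-k}
=\sum_{k=0}^n(-1)^kk\binm{a+k}{k}\binm{b+n}{n-k}
-\sum_{k=0}^n(-1)^k\frac{k}{1+k}\binm{a+k}{k}\binm{b+n}{n-k}.
\]
Evaluating the first sum by~\eqref{chu-vandermonde-a} and the second by~\eqref{chu-vandermonde-g} produces a closed expression that is a rational function of $a,b,n$ times $\binm{b-a-1+n}{n}$, together with the term $\frac1a\binm{b+n}{1+n}$ inherited from~\eqref{chu-vandermonde-g}; denote the result by $(\ast)$.

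Then I would set $a=\frac{x+1}{2}$, $b=-\frac{x}{2}$ in $(\ast)$, so that $\binm{b-a-1+n}{n}=\binm{-x-\frac32+n}{n}$, and apply $\mathcal{D}_x$ to both sides. Using~\eqref{relation-f} to replace $\frac{1}{1+k}\binm{\frac12+k}{k}$ by $4^{-k}\binm{1+2k}{k}$ and~\eqref{relation-d} to replace $\tfrac12\sum_{i=1}^k\frac{1}{i+\frac12}+\tfrac12H_k$ by $H_{1+2k}-1$, the differentiated left-hand side collapses to
\[
\sum_{k=0}^{n}\bigg(-\frac14\bigg)^k\binm{n}{k}\binm{1+2k}{k}k^2H_{1+2k}
-\frac{2+H_n}{2}\sum_{k=0}^{n}\bigg(-\frac14\bigg)^k\binm{n}{k}\binm{1+2k}{k}k^2,
\]
exactly the combination met in Theorems~\ref{thm-j} and~\ref{thm-k}. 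On the right I would differentiate the product of $\binm{-x-\frac32+n}{n}$ with the rational coefficient from $(\ast)$; this yields a rational multiple of $\binm{-\frac32+n}{n}$, a piece proportional to $\binm{-\frac32+n}{n}\sum_{i=1}^n\frac{1}{i-\frac32}$, and the constant $-\frac{1}{1+n}$ surviving from $\mathcal{D}_x\bigl[\frac1a\binm{b+n}{1+n}\bigr]$.

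To finish, I would transpose the unwanted $\frac{2+H_n}{2}$-series to the right and evaluate $\sum_{k=0}^{n}(-\frac14)^k\binm{n}{k}\binm{1+2k}{k}k^2$ by specializing $(\ast)$ at $x=0$, i.e.\ $a=\frac12$, $b=0$. Converting $\binm{-\frac32+n}{n}$ into a multiple of $4^{-n}\binm{1+2n}{n}$ and the digamma-type sum $\sum_{i=1}^n\frac{1}{i-\frac32}$ into $2H_{1+2n}-H_n$ (as in the proof of Theorem~\ref{thm-k}), then collecting all rational parts, should reproduce the stated formula, including the trailing $-\frac{1}{1+n}$.

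The main obstacle is entirely computational rather than conceptual: one must carry out $\mathcal{D}_x$ on the product of $\binm{-x-\frac32+n}{n}$ with a cubic-over-cubic rational function of $x$, track the several overlapping harmonic- and digamma-type sums it generates, and finally simplify numerators of degree six in $n$ into the compact closed form displayed. I expect no new idea beyond the careful bookkeeping already exercised in Theorems~\ref{thm-f}, \ref{thm-j}, and~\ref{thm-k}.
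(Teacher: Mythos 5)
Your proposal is correct and follows essentially the same route as the paper: specialize the Chu--Vandermonde convolution at $a=\frac{x+1}{2}$, $b=-\frac{x}{2}$ with the weight $\frac{k^2}{1+k}$, apply $\mathcal{D}_x$, simplify via \eqref{relation-d} and \eqref{relation-f}, and evaluate the leftover $k^2$-series by the same weighted identity at $x=0$. The only (immaterial) difference is that you split $\frac{k^2}{1+k}=k-\frac{k}{1+k}$ and invoke \eqref{chu-vandermonde-a} and \eqref{chu-vandermonde-g}, whereas the paper writes $\frac{k^2}{1+k}=k-1+\frac{1}{1+k}$ and invokes \eqref{chu-vandermonde-a}, \eqref{chu-vandermonde} and \eqref{chu-vandermonde-f}; since \eqref{chu-vandermonde-g} is itself the difference of the latter two, the resulting closed form \eqref{chu-vandermonde-i} is identical.
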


\begin{proof}
 It is not difficult to see that
 \bnm
 &&\qqdn\xxqdn\sum_{k=0}^n(-1)^k\frac{k^2}{1+k}\binm{a+k}{k}\binm{b+n}{n-k}\\
 &&\qqdn\xxqdn\:=\:\sum_{k=0}^n(-1)^k\frac{(k^2-1)+1}{1+k}\binm{a+k}{k}\binm{b+n}{n-k}\\
 &&\qqdn\xxqdn\:=\:\sum_{k=0}^n(-1)^kk\binm{a+k}{k}\binm{b+n}{n-k}\\
 &&\qqdn\xxqdn\:-\:\sum_{k=0}^n(-1)^k\binm{a+k}{k}\binm{b+n}{n-k}\\
&&\qqdn\xxqdn\:+\:\,\sum_{k=0}^n(-1)^k\frac{1}{1+k}\binm{a+k}{k}\binm{b+n}{n-k}.
 \enm
Evaluate respectively the three series on the right hand side by
\eqref{chu-vandermonde-a}, \eqref{chu-vandermonde} and
\eqref{chu-vandermonde-f} to obtain
 \bmn\label{chu-vandermonde-i}
&&\qqdn\sum_{k=0}^n(-1)^k\frac{k^2}{1+k}\binm{a+k}{k}\binm{b+n}{n-k}=\frac{1}{a}\binm{b+n}{1+n}
 \nnm\\
&&\qqdn\:+\:\frac{(1+a)(an+a-1)n-(1+a-2n-an)b+b^2}{a(1+n)(1+a-b-n)}\binm{b-a-1+n}{n}.
 \emn
The case $a=\frac{x+1}{2}$, $b=-\frac{x}{2}$ of it can be written as
\bnm
 &&\xxqdn\qqdn\sum_{k=0}^{n}(-1)^k\frac{k^2}{1+k}\binm{\frac{x+1}{2}+k}{k}\binm{-\frac{x}{2}+n}{n-k}\\
&&\xxqdn\qqdn\:=\frac{(3+x)^2n^2-(1+x)(6+x)n+x(3+2x)}{(1+n)(1+x)(3+2x-2n)}{\binm{-x-\tfrac{3}{2}+n}{n}}\\
&&\xxqdn\qqdn\:-\:\frac{x}{(1+n)(1+x)}{\binm{-\frac{x}{2}+n}{n}}.
 \enm
Applying the derivative operator $\mathcal{D}_x$ to both sides of
the last equation, we get
  \bnm
 &&\sum_{k=0}^{n}(-1)^k\frac{k^2}{1+k}\binm{n}{k}\binm{\tfrac{1}{2}+k}{k}
  \bigg\{\frac{1}{2}\sum_{i=1}^k\frac{1}{i+\frac{1}{2}}+\frac{1}{2}H_k-\frac{1}{2}H_n\bigg\}
   \nnm\\
  &&\:=\:\frac{3n(2-3n)}{(1+n)(3-2n)}{\binm{-\tfrac{3}{2}+n}{n}}
  \bigg\{\sum_{i=1}^n\frac{1}{i-\frac{3}{2}}+\frac{9+n(3-25n+6n^2)}{3n(3-2n)(2-3n)}\bigg\}-\frac{1}{1+n}.
 \enm
In terms of \eqref{relation-d} and \eqref{relation-f}, the last
equation can be reformulated as
 \bnm
&&\xxqdn\sum_{k=0}^{n}\bigg(-\frac{1}{4}\bigg)^k\binm{n}{k}\binm{1+2k}{k}k^2H_{1+2k}=\frac{3n(2-3n)}{(3-2n)(1-4n^2)4^n}\binm{1+2n}{n}\\
&&\xxqdn\:\times\:\bigg\{2H_{1+2n}-H_n-\frac{2-8n-8n^2}{1-4n^2}+\frac{9+n(3-25n+6n^2)}{3n(3-2n)(2-3n)}\bigg\}-\frac{1}{1+n}\\
&&\xxqdn\:+\:
\frac{2+H_n}{2}\sum_{k=0}^{n}\bigg(-\frac{1}{4}\bigg)^k\binm{n}{k}\binm{1+2k}{k}k^2.
 \enm
 Reckoning the series on the right hand side by \eqref{chu-vandermonde-i}, we gain Theorem \ref{thm-l}.
\end{proof}

 \textbf{Acknowledgments}

The work is supported by the National Natural Science Foundation of
China (No. 11661032).



\end{document}